\theoremstyle{plain}
\newtheorem{theorem}{Theorem}[section]
\newtheorem{proposition}[theorem]{Proposition}
\newtheorem{lemma}[theorem]{Lemma}
\newtheorem{corollary}[theorem]{Corollary}
\newtheorem{conjecture}[theorem]{Conjecture}
\theoremstyle{definition}
\newtheorem{definition}[theorem]{Definition}
\newtheorem{example}[theorem]{Example}
\theoremstyle{remark}
\newtheorem{remark}[theorem]{Remark}
\numberwithin{equation}{section}
\newcommand{\be}{\begin{enumerate}}
    \newcommand{\ene}{\end{enumerate}}
    \newcommand{\ZZ}{\mathbb{Z}}
    \newcommand{\Id}{\operatorname{Id}}
    \newcommand{\BB}{\operatorname{\mathbf{B}}}
    \newcommand{\fn}{\operatorname{\mathfrak{n}}}
    \newcommand{\bb}{\mathbf{b}}
    \newcommand{\bfa}{\mathbf{a}}
    \newcommand{\bfe}{\mathbf{e}}
    \newcommand{\bfi}{\mathbf{i}}
    \newcommand{\bfj}{\mathbf{j}}
    \newcommand{\bfk}{\mathbf{k}}
    \newcommand{\bt}{\mathbf{t}}
    \newcommand{\cL}{\mathcal L}
    \newcommand{\cA}{\mathcal A}
\DeclareMathOperator*{\wt}{wt}
\DeclareMathOperator{\Br}{Br}
\newlength{\mysizetiny}
\newlength{\mysizesmall}
\newlength{\mysize}
\newlength{\mysizelarge}
\begin{document}

\title{On cluster structures of bosonic extensions}
\author{Yingjin Bi}
\address{Department of Mathematics, Harbin Engineering University}
\email{yingjinbi@mail.bnu.edu.cn}
\date{} 

\begin{abstract}
We study quantum cluster structures on bosonic extensions of quantum
unipotent coordinate rings. For a positive braid group element
$b\in \operatorname{Br}^+$, Kashiwara--Kim--Oh--Park introduced a
subalgebra $\widehat{\mathcal A}(b)$ and conjectured that it admits a
quantum cluster algebra structure whose cluster monomials belong to the
global basis.

In this paper, we analyze Lusztig parametrizations of the global basis of
$\widehat{\mathcal A}(b)$ and study their transition maps under braid
moves. We prove that the resulting quantum cluster structure is
independent of the chosen expression of $b$.  Combining these ingredients, we prove the Kashiwara--Kim--Oh--Park
conjecture for every \(b\in\operatorname{Br}^+\) in type ADE. Our proof is based on the compatibility between Lusztig parametrizations,
braid moves, and cluster mutations, and is different from the approaches
of Qin and of Kashiwara--Kim--Oh--Park. We also establish quantum
\(T\)-system relations for generalized quantum minors and show that these
minors occur as cluster variables.
\end{abstract}
\keywords{Bosonic extensions; Cluster algebras; Hernandez--Leclerc categories.}
\maketitle 

\section{Introduction}

Let \(C=(c_{ij})_{i,j\in I}\) be a symmetrizable generalized Cartan
matrix, and let \(A_q(\mathfrak n)\) be the quantum unipotent coordinate
ring associated with \(C\). The bosonic extension
\(\widehat{\mathcal A}\) of \(A_q(\mathfrak n)\) was introduced in finite
type by Oh--Park~\cite{oh2025pbw}, and later generalized to arbitrary
symmetrizable Cartan matrices by Kashiwara--Kim--Oh--Park
\cite{kashiwara2024global}. 

In simply-laced Dynkin type, the bosonic extension \(\widehat{\mathcal A}\)
is closely related to the quantum Grothendieck ring of the
Hernandez--Leclerc category \(\mathscr C_{\mathbb Z}\) of
finite-dimensional representations of the quantum loop algebra
\(U_q(L\mathfrak g)\), where \(\mathfrak g\) is a finite-dimensional simple
Lie algebra~\cite{hernandez2015quantum}. For an interval
\(-\infty\leq a\leq c<+\infty\), let
\(\mathscr C^{[a,c]}\subset \mathscr C_{\mathbb Z}\) denote the
corresponding Hernandez--Leclerc subcategory. The quantum Grothendieck
rings \(K_t(\mathscr C^{[a,c]})\) are known to admit quantum cluster
algebra structures~\cite{hernandez2016cluster,fujita2023isomorphisms}.
It is therefore natural to ask how these cluster structures are related
to the subalgebras \(\widehat{\mathcal A}(b)\) associated with positive
braid group elements.

In~\cite{kashiwara2024braid}, Kashiwara--Kim--Oh--Park constructed braid
group symmetries
\[
T_i:\widehat{\mathcal A}\longrightarrow \widehat{\mathcal A}
\qquad (i\in I),
\]
and proved that these operators satisfy the braid relations. Hence every
positive braid group element \(b\in\Br^+\) defines an algebra automorphism
\[
T_b:\widehat{\mathcal A}\longrightarrow \widehat{\mathcal A}.
\]
The bosonic extension is naturally \(\mathbb Z\)-graded. Let
\(\widehat{\mathcal A}_{\geq0}\) and \(\widehat{\mathcal A}_{<0}\) be the
subalgebras generated by homogeneous elements of nonnegative and negative
degree, respectively. For \(b\in\Br^+\), define
\[
\widehat{\mathcal A}(b)
:=
\widehat{\mathcal A}_{\geq0}
\cap
T_b(\widehat{\mathcal A}_{<0}).
\]
Kashiwara--Kim--Oh--Park proposed the following conjecture.

\begin{conjecture}[{\cite[Conjecture~1.1]{kashiwara2024braid}}]
\label{con_main}
For every positive braid group element \(b\in \Br^+\), the algebra
\(\widehat{\mathcal A}(b)\) admits a quantum cluster algebra structure
whose cluster monomials belong to the global basis.
\end{conjecture}
 In simply-laced Dynkin type, Conjecture~\ref{con_main} was proved by Qin
\cite{qin2024analog} using the theory of based quantum cluster algebras.
Independently, Kashiwara--Kim--Oh--Park~\cite{kashiwara2025monoidal} proved the conjecture in the same
setting by means of monoidal categorification and module categories of quantum affine algebras.

The purpose of the present paper is to give a new and direct proof of this
cluster structure result, based on Lusztig parametrizations of the global
basis. Our method is different from both Qin's approach via based quantum
cluster algebras and the categorical approach of Kashiwara--Kim--Oh--Park.
We work directly with
the global basis of the bosonic extension. The key point is to compare the
Lusztig parametrizations attached to different expressions of a positive
braid group element and to prove that the corresponding transition maps
are compatible with cluster mutations.\\

\noindent
We now describe our main results. Let
\[
\mathbf i=(i_1,\dots,i_\ell)
\]
be an expression of \(b\in\Br^+\). For \(1\leq a\leq c\leq \ell\), we say
that \([a,c]\) is an \(i\)-box if \(i_a=i_c\). Associated with such boxes
are generalized quantum minors
\[
D_{\mathbf i}[a,c].
\]
In particular, the initial cluster variables attached to the word
\(\mathbf i\) are written in the form
\[
D_{\mathbf i}[s,\ell\},
\qquad
1\leq s\leq \ell.
\]
Together with these minors, one has an exchange matrix \(B_{\mathbf i}\),
a compatible skew-symmetric matrix \(\Lambda_{\mathbf i}\), and the set of
exchangeable indices
\[
K^{\mathrm{ex}}
=
\{s\in[1,\ell]\mid s^-\neq -\infty\},
\qquad
s^-:=\max\{r<s\mid i_r=i_s\}\cup\{-\infty\}.
\]

Our first main theorem proves that the cluster structure is independent of
the expression of \(b\).

\begin{theorem}[Theorem~\ref{theo_words}]
\label{theo_intro_main}
Assume that
\[
c_{ij}c_{ji}\leq 1
\qquad (i\neq j).
\] Suppose that
\(\widehat{\mathcal A}(b)\) admits a quantum cluster algebra structure
with initial seed
\[
\mathbf t_{\mathbf i}
=
\Bigl(
(D_{\mathbf i}[s,\ell\})_{1\leq s\leq \ell},
\Lambda_{\mathbf i},
B_{\mathbf i},
K^{\mathrm{ex}}
\Bigr),
\]
and suppose that all cluster monomials belong to the global basis of
\(\widehat{\mathcal A}(b)\). Then the same statement holds for every
expression of \(b\).
\end{theorem}
 Under this assumption, all braid relations are generated by \(2\)-moves
and \(3\)-moves.

The proof of Theorem~\ref{theo_intro_main} is based on an explicit
description of Lusztig transition maps under \(2\)-moves and \(3\)-moves.
For a \(2\)-move, the transition map simply permutes the corresponding two
coordinates. For a \(3\)-move
\[
(i,j,i)\longleftrightarrow (j,i,j),
\qquad
c_{ij}c_{ji}=1,
\]
the transition map is the usual piecewise-linear rank-two transformation.
We prove that these transition maps send the Lusztig parameters of the
cluster variables for one expression to those for the other expression.
Consequently, braid moves correspond exactly to cluster mutations and
permutations of seeds.

Our second main result proves the conjecture in simply-laced Dynkin type
and, at the same time, establishes quantum \(T\)-system relations for the
generalized quantum minors.

\begin{theorem}[Theorem~\ref{theo_Dynkincase}; Theorem~\ref{thm_Tsystem}]
\label{theo_conmain}
Assume that \(C\) is of simply-laced Dynkin type. Then, for every positive
braid group element \(b\in\Br^+\), the algebra
\(\widehat{\mathcal A}(b)\) admits a quantum cluster algebra structure
whose cluster monomials belong to the global basis.

Moreover, let \(\mathbf i=(i_1,\dots,i_\ell)\) be an expression of
\(b\), and let \([a,c]\) be an \(i\)-box. Then the generalized quantum
minors satisfy the quantum \(T\)-system relation
\[
D_{\mathbf i}[a^+,c]\,
D_{\mathbf i}[a,c^-]
=
q^A
D_{\mathbf i}[a,c]\,
D_{\mathbf i}[a^+,c^-]
+
q^B\prod_{\substack{j\in I\\ c_{j i_a}=-1}}
D_{\mathbf i}[a(j)^+,c(j)^-],
\]
for some integers \(A,B\). Here we use the convention that
\[
D_{\mathbf i}[u,v]=1
\qquad\text{if }u>v.
\]
\end{theorem}

The quantum \(T\)-system is a central component of our result. It gives an
explicit exchange relation for generalized quantum minors under cluster
mutation. Consequently, these minors are not merely distinguished global
basis elements; they also occur as cluster variables. A key advantage of our approach is that the \(T\)-system is obtained
directly from Lusztig parameters. More precisely, it follows from the
comparison of Lusztig parametrizations together with a local analysis of
the exchange quiver. We also formulate a conjectural quantum \(T\)-system for generalized
quantum minors in the general symmetric Kac--Moody setting; see
Conjecture~\ref{con_Tsystem}.

In this paper, we develop a Lusztig-parametrization approach to the global
basis of bosonic extension algebras. Thus the main novelty of this paper lies in an explicit
Lusztig-parametrization description of the change of cluster seeds under
braid moves. This allows us to prove that the resulting cluster structure on
\(\widehat{\mathcal A}(b)\) is independent of the chosen expression of
the braid group element \(b\).
Furthermore, we construct explicit mutation sequences which realize the
generalized quantum minors as cluster variables. As a result, we obtain the quantum
$T$-system for these minors and prove
Conjecture~\ref{con_main} for simply-laced Dynkin type.

 Although the existence theorem is proved here in simply-laced Dynkin type,
the Lusztig-parametrization part of the argument is formulated in a way
that does not rely on finite type. This suggests a possible extension to
symmetric Kac--Moody type. 

Moreover, the explicit mutation sequences we constructed also give the quantum
\(T\)-system for the corresponding quantum minors in more general cases. Thus, in this framework,
Conjecture~\ref{con_main} implies Conjecture~\ref{con_Tsystem} in a more
general setting.

\subsection*{Organization}

The paper is organized as follows. In Section~\ref{sec_quantum}, we recall
basic material on quantum unipotent coordinate rings, dual PBW bases,
dual canonical bases, Lusztig parametrizations, and quantum cluster
algebras. In Section~\ref{sec_bosonic}, we review bosonic extensions of
quantum coordinate rings, their global bases, braid group symmetries, and
PBW bases. We then introduce the quantum minors associated with expressions
of positive braid group elements and formulate the conjectural cluster
structure on \(\widehat{\mathcal A}(b)\).

The next part of Section~\ref{sec_bosonic} is devoted to Lusztig transition
maps and their compatibility with braid moves. This proves that the
cluster structure is independent of the chosen expression. We then study
explicit mutation sequences on the exchange quivers and use them to show
that generalized quantum minors occur as cluster variables. This also
gives the quantum \(T\)-system for these minors. Finally, in
Section~\ref{sec_monoidal}, we recall the categorical realization in
simply-laced Dynkin type and combine it with the preceding results to
prove Theorem~\ref{theo_conmain}.

\subsection*{Acknowledgements}
The author is grateful to Ryo Fujita and Masaki Kashiwara for their valuable support, encouragement, and mathematical guidance during the preparation of this work.

\section{Preliminaries}\label{sec_quantum}

Let $C = (c_{ij})$ be a generalized Cartan matrix of size $I \times I$, and let $\mathfrak{g}$ be the associated Kac--Moody Lie algebra. We denote by $R^+$ the set of positive roots, by $Q$ (resp. $Q^+$) the root lattice (resp. positive root lattice), and by $\alpha_i$ (resp. $\alpha_i^\vee$) the simple roots (resp. simple coroots) for $C$. The fundamental weights are denoted by $\varpi_i$, and the weight lattice is defined as $P = \mathbb{Z}[\varpi_i]_{i \in I}$. We choose a diagonal matrix $D = \operatorname{diag}(d_i)_{i \in I}$ such that $DC = (a_{ij})$ is symmetric and $\min\{d_i \mid i \in I\} = 1$. A bilinear form $(\cdot, \cdot): P \times P \to \mathbb{Z}$ is defined by $(\alpha_i, \alpha_j) = a_{ij}$. Additionally, we set
\[
\langle h, \alpha_i \rangle = \frac{2(h, \alpha_i)}{(\alpha_i, \alpha_i)} \quad \text{for any } h \in P.
\]
The Weyl group $W$ of $\mathfrak{g}$ is generated by simple reflections $s_i$, which act on $P$ via
\[
s_i(\lambda) = \lambda - \langle \lambda, \alpha_i \rangle \alpha_i.
\]

Let $\mathbb{K}$ be either the field $\mathbb{Q}(q^{1/2})$ or the ring $\mathbb{Z}[q^{\pm 1/2}]$. For each $i \in I$, define $q_i = q^{d_i}$ and the quantum integers as follows:
\[
[n]_i = \frac{q_i^n - q_i^{-n}}{q_i - q_i^{-1}}, \quad [n]_i! = [n]_i [n-1]_i \cdots [1]_i, \quad \text{and} \quad \begin{bmatrix}
n\\ k
\end{bmatrix}_i = \frac{[n]_i!}{[k]_i! [n - k]_i!}.
\]

\subsection{Quantum unipotent coordinate rings}

Let $C=(c_{ij})_{i,j\in I}$ be a symmetrizable generalized Cartan matrix, and let $U_q(\mathfrak g)$ be the corresponding quantum group over $\mathbb Q(q^{1/2})$. Recall that $U_q(\mathfrak g)$ is generated by
\[
\{e_i,f_i\mid i\in I\}\cup \{q^h\mid h\in P\},
\]
subject to the standard Drinfeld--Jimbo relations:
\begin{align*}
&q^0=1,
\qquad
q^hq^{h'}=q^{h+h'},
\\
&q^he_iq^{-h}=q^{\langle h,\alpha_i\rangle}e_i,
\qquad
q^hf_iq^{-h}=q^{-\langle h,\alpha_i\rangle}f_i,
\\
&e_if_j-f_je_i=\delta_{ij}\frac{t_i-t_i^{-1}}{q_i-q_i^{-1}},
\qquad
t_i=q^{d_i\alpha_i^\vee},
\\
&\sum_{r=0}^{1-c_{ij}}(-1)^r
\begin{bmatrix}
1-c_{ij}\\ r
\end{bmatrix}_i
 e_i^{1-c_{ij}-r}e_je_i^r=0
\qquad (i\neq j),
\\
&\sum_{r=0}^{1-c_{ij}}(-1)^r
\begin{bmatrix}
1-c_{ij}\\ r
\end{bmatrix}_i
 f_i^{1-c_{ij}-r}f_jf_i^r=0
\qquad (i\neq j).
\end{align*}

Let $U_q(\mathfrak n)$ be the subalgebra generated by $\{e_i\mid i\in I\}$. It is naturally $Q^+$-graded:
\[
U_q(\mathfrak n)=\bigoplus_{\alpha\in Q^+}U_q(\mathfrak n)_\alpha.
\]
The quantum unipotent coordinate ring is defined as the graded dual
\[
A_q(\mathfrak n)
:=
\bigoplus_{\alpha\in Q^+}
\operatorname{Hom}_{\mathbb Q(q^{1/2})}(U_q(\mathfrak n)_\alpha,\mathbb Q(q^{1/2})).
\]
When $q=1$, the specialization $A(\mathfrak n)$ is canonically isomorphic to the coordinate ring $\mathbb C[N]$ of the maximal unipotent subgroup $N$ associated with $\mathfrak g$. Hence $A_q(\mathfrak n)$ is called the \emph{quantum unipotent coordinate ring}.

\subsubsection{Dual PBW basis}

Fix a reduced expression
\[
\mathbf{i}=(i_1,\dots,i_\ell)
\]
of a Weyl group element $w\in W$. Associated with $\mathbf{i}$ is the sequence of positive roots
\[
\beta_k:=s_{i_1}\cdots s_{i_{k-1}}(\alpha_{i_k}),
\qquad
1\le k\le \ell.
\]
The corresponding root vectors are denoted by $E^*(\beta_k)$. Let

\[
R(w)=\{\beta_k\mid 1\le k\le \ell\},
\]
and this set is independent of the chosen reduced expression.
Define
\[
\mathfrak n(w):=\bigoplus_{\alpha\in R(w)}\mathfrak n_\alpha.
\]
The algebra $A_q(\mathfrak n(w))$ is the subalgebra of $A_q(\mathfrak n)$ generated by
\[
\{E^*(\beta)\mid \beta\in R(w)\}.
\]
\begin{definition}\label{def_order}
For
\[
\mathbf a=(a_1,\dots,a_\ell),
\qquad
\mathbf b=(b_1,\dots,b_\ell),
\]
we define three partial orders as follows.
\begin{enumerate}
    \item We write $\mathbf a\prec_l \mathbf b$ if there exists $k$ such that
\[
a_j=b_j \quad \text{for all } j<k,
\qquad
a_k<b_k.
\]
Equivalently, $\mathbf a$ is smaller than $\mathbf b$ in the left
lexicographic order.
\item Similarly, we write $\mathbf a\prec_r \mathbf b$ if there exists $p$ such that
\[
a_i=b_i \quad \text{for all } i>p,
\qquad
a_p<b_p.
\]
Equivalently, $\mathbf a$ is smaller than $\mathbf b$ in the right
lexicographic order.
\item Finally, we write
\[
\mathbf a\prec \mathbf b
\]
if both $\mathbf a\prec_l\mathbf b$ and $\mathbf a\prec_r\mathbf b$ hold.
\end{enumerate}
\end{definition}

For $\mathbf a=(a_1,\dots,a_\ell)\in \mathbb Z_{\ge0}^\ell$, define
\[
E^*(\mathbf i, \mathbf a)
:=
\Bigl(\prod_{k=1}^\ell q^{a_k(a_k-1)}\Bigr)
E^*(\beta_\ell)^{a_\ell}\cdots E^*(\beta_1)^{a_1}.
\]

\begin{theorem}[{\cite{lusztig2010introduction}}]
The set
\[
\{E^*(\mathbf i, \mathbf a)\mid \mathbf a\in \mathbb Z_{\ge0}^\ell\}
\]
forms a basis of $A_q(\mathfrak n(w))$, called the \emph{dual PBW basis}.
\end{theorem}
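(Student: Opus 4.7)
The plan is to follow the standard strategy due to Lusztig, working entirely on the dual side in $A_q(\mathfrak{n})$. First, I would fix the reduced expression $\underline{w} = s_{i_1}\cdots s_{i_\ell}$ and define the dual root vectors
\[
E^*(\beta_k) := T^+_{i_1} T^+_{i_2} \cdots T^+_{i_{k-1}}(E^*_{i_k}),
\]
using the dual braid symmetrizer $T^+_i$ on $A_q(\mathfrak{n})$. A preliminary check, which is a direct consequence of the definition of $T^+_i$ as the dual of Lusztig's automorphism of $U_q(\mathfrak{g})$, is that each $E^*(\beta_k)$ lies in $A_q(\mathfrak{n})$ and has weight $-\beta_k$. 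The subalgebra generated by these elements is, by definition, $A_q(\mathfrak{n}(w))$.

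The core step is to establish the Levendorskii--Soibelman commutation relations: for $k<l$ in $[1,\ell]$ one has
\[
E^*(\beta_l)\,E^*(\beta_k) \;-\; q^{-(\beta_k,\beta_l)}\,E^*(\beta_k)\,E^*(\beta_l) \;\in\; \operatorname{span}_{\mathbb{K}}\bigl\{E^*(\mathbf{c}) : \supp(\mathbf{c})\subset\{k+1,\dots,l-1\}\bigr\}.
\]
My plan is to reduce this to the case $l = k+1$ by applying the braid automorphism $T^+_{i_1}\cdots T^+_{i_{k-1}}$ to translate to the situation where $\beta_k = \alpha_{i_k}$ and $\beta_{k+1} = s_{i_k}(\alpha_{i_{k+1}})$, and then to verify the two-variable identity inside the rank-two quantum subalgebra generated by $E^*_{i_k}, E^*_{i_{k+1}}$. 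This reduces to a finite case-by-case verification (types $A_1\times A_1$, $A_2$, $B_2$, $G_2$) which follows from the dual quantum Serre relations. The general case $l-k>1$ then follows by inserting the factorization $T^+_{i_k}$ and iterating, using the inductive form of the commutation relations.

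Once the Levendorskii--Soibelman relations are in hand, the spanning property of $\{E^*(\mathbf{a})\}$ follows by the usual straightening algorithm: any product of the $E^*(\beta_k)$'s can be reordered into the decreasing convex order, incurring correction terms all of whose $E^*$-exponent vectors are strictly smaller in the order $\prec$ defined in \eqref{eq_order}, so the process terminates and produces a $\mathbb{K}$-linear combination of ordered monomials $E^*(\mathbf{a})$.

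For linear independence, my plan is to argue by a weight-space dimension count. For each $\alpha\in Q^+$, the number of vectors $\mathbf{a}\in\mathbb{Z}_{\geq 0}^{[1,\ell]}$ with $\sum_k a_k\beta_k = \alpha$ is the restricted Kostant partition number $p_w(\alpha)$, which equals $\dim U(\mathfrak{n}(w))_{-\alpha}$. A dimension comparison between $A_q(\mathfrak{n}(w))_{-\alpha}$ and its classical limit, together with the fact that $\dim A_q(\mathfrak{n}(w))_{-\alpha}\leq p_w(\alpha)$ by the spanning step, forces equality and hence linear independence. Concretely, I would pair $E^*(\mathbf{a})$ with the PBW monomials $F(\mathbf{a})$ on the $U_q(\mathfrak{n})$-side with respect to the canonical pairing $A_q(\mathfrak{n})\times U_q(\mathfrak{n})\to\mathbb{K}$, and check that the pairing matrix, when PBW monomials are ordered by $\prec$, is upper-triangular with non-zero diagonal entries (in fact diagonal entries are explicit products of quantum factorials). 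The main obstacle is this last non-degeneracy computation; it is reduced to the rank-one case by stripping off factors $T^+_i$, but requires care in tracking the behavior of the pairing under the braid operators. Once this is done, linear independence is immediate, completing the proof.
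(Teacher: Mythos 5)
This statement is quoted in the paper as a known result of Lusztig (and its dual-side analogues); the paper supplies no proof of its own, so there is nothing internal to compare against. Your proposal reconstructs the standard argument and is sound in outline: Levendorskii--Soibelman type commutation relations plus the straightening algorithm give that the ordered monomials $E^*(\mathbf{a})$ span the subalgebra generated by the $E^*(\beta_k)$, and linear independence comes either from a flatness/classical-limit dimension count or from orthogonality of dual PBW monomials against PBW monomials of $U_q(\mathfrak{n})$ under the canonical pairing. Two remarks. First, your phrase ``reduce to the case $l=k+1$'' is only literally correct for the $q$-commutation of consecutive root vectors; for $l-k>1$ the image of $\beta_l$ under $(T^+_{i_1}\cdots T^+_{i_{k-1}})^{-1}$ need not lie in the rank-two subalgebra generated by the indices $i_k,i_{k+1}$, so the general relation genuinely requires the induction on $l-k$ (with weight and convexity arguments to identify the correction terms) that you only gesture at in your last sentence of that step. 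Second, you correctly locate the real content in the non-degeneracy of the pairing: the fact that $\langle E^*(\mathbf{a}),E(\mathbf{b})\rangle$ is diagonal (up to explicit products of quantum factorials) rests on the compatibility of the bilinear form with the braid operators on $U_q(\mathfrak{n})\cap T_w(U_q(\mathfrak{n}))$, which is precisely the technical result in Lusztig's book that the paper's citation points to; alternatively the classical-limit count needs an integral form to justify specialization. With either of those inputs made precise, your plan yields the theorem, so the proposal is an acceptable (and essentially the standard) proof strategy rather than a new route.
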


\subsubsection{Dual canonical basis}

Let $\overline{\phantom{x}}$ be the bar involution on $A_q(\mathfrak n)$ defined by
\[
\overline q=q^{-1},
\qquad
\overline{\langle i\rangle}=\langle i\rangle.
\]

\begin{theorem}[{\cite{lusztig2010introduction}}]
\label{theo_canonical}
For every reduced expression $\mathbf i$ of $w$, there exists a unique basis
\[
\{B(\mathbf i,\mathbf a)\mid \mathbf a\in \mathbb Z_{\ge0}^\ell\}
\]
of $A_q(\mathfrak n(w))$ satisfying:
\begin{enumerate}
\item $\overline{B(\mathbf i,\mathbf a)}=B(\mathbf i,\mathbf a)$;
\item
\[
E^*(\mathbf i, \mathbf a)
=
B(\mathbf i,\mathbf a)
+
\sum_{\mathbf a'\prec \mathbf a}
f_{\mathbf a'}(q)B(\mathbf i,\mathbf a'),
\]
where $f_{\mathbf a'}(q)\in q\mathbb Z[q]$.
\end{enumerate}
The basis $\{B(\mathbf i,\mathbf a)\}$ is called the \emph{dual canonical basis}. We denote by $\BB(w)$ the set of dual canonical basis of $A_q(\fn(w))$.
\end{theorem}

\begin{lemma}
\label{lem_prodcanonical}
For any $\mathbf a,\mathbf b\in \mathbb Z_{\ge0}^\ell$,
\[
B(\mathbf i,\mathbf a)B(\mathbf i,\mathbf b)
=
q^AB(\mathbf i,\mathbf a+\mathbf b)
+
\sum_{\mathbf c\prec \mathbf a+\mathbf b}
 g_{\mathbf c}(q)B(\mathbf i, \mathbf c),
\]
where $g_{\mathbf c}(q)\in \mathbb Z[q^{\pm}]$.
\end{lemma}

\begin{proof}
By the triangular relation between the dual PBW basis and the dual
canonical basis, equivalently, since the transition matrix is unitriangular, we may invert
the triangular relation and obtain
\[
B(\mathbf i,\mathbf a)
=
E^*(\mathbf i,\mathbf a)
+
\sum_{\mathbf a'\prec \mathbf a}
u_{\mathbf a'}(q)E^*(\mathbf i,\mathbf a')
\]
for some \(u_{\mathbf a'}(q)\in \mathbb Z[q^{\pm 1}]\). Similarly,
\[
B(\mathbf i,\mathbf b)
=
E^*(\mathbf i,\mathbf b)
+
\sum_{\mathbf b'\prec \mathbf b}
v_{\mathbf b'}(q)E^*(\mathbf i,\mathbf b').
\]

Therefore the highest PBW term of
\(B(\mathbf i,\mathbf a)B(\mathbf i,\mathbf b)\) is the same as the
highest PBW term of
\(E^*(\mathbf i,\mathbf a)E^*(\mathbf i,\mathbf b)\).

By the Levendorskii--Soibelman formula, we have
\begin{equation}\label{eq_prod_pbw}
E^*(\mathbf i,\mathbf a)E^*(\mathbf i,\mathbf b)
=
q^A E^*(\mathbf i,\mathbf a+\mathbf b)
+
\sum_{\mathbf c\prec \mathbf a+\mathbf b}
h_{\mathbf c}(q)E^*(\mathbf i,\mathbf c).
\end{equation}
Thus the coefficient of the highest PBW term
\(E^*(\mathbf i,\mathbf a+\mathbf b)\) in
\(B(\mathbf i,\mathbf a)B(\mathbf i,\mathbf b)\) is \(q^A\).

On the other hand, write
\[
B(\mathbf i,\mathbf a)B(\mathbf i,\mathbf b)
=
\sum_{\mathbf c\preccurlyeq \mathbf a+\mathbf b}
f_{\mathbf c}(q)B(\mathbf i,\mathbf c).
\]
Since
\[
B(\mathbf i,\mathbf a+\mathbf b)
=
E^*(\mathbf i,\mathbf a+\mathbf b)
+
\sum_{\mathbf c\prec \mathbf a+\mathbf b}
w_{\mathbf c}(q)E^*(\mathbf i,\mathbf c),
\]
and every \(B(\mathbf i,\mathbf c)\) with
\(\mathbf c\prec \mathbf a+\mathbf b\) has highest PBW term strictly lower
than \(E^*(\mathbf i,\mathbf a+\mathbf b)\), the coefficient of
\(E^*(\mathbf i,\mathbf a+\mathbf b)\) in the PBW expansion of
\(B(\mathbf i,\mathbf a)B(\mathbf i,\mathbf b)\) is exactly
\(f_{\mathbf a+\mathbf b}(q)\). Hence
\[
f_{\mathbf a+\mathbf b}(q)=q^A.
\]
This proves
\[
B(\mathbf i,\mathbf a)B(\mathbf i,\mathbf b)
=
q^A B(\mathbf i,\mathbf a+\mathbf b)
+
\sum_{\mathbf c\prec \mathbf a+\mathbf b}
g_{\mathbf c}(q)B(\mathbf i,\mathbf c),
\]
as desired.
\end{proof}

\subsubsection{Transition maps of Lusztig parameters}

Let
\[
\Phi_{\mathbf i}:\mathbb Z_{\ge0}^\ell\longrightarrow \BB(w)
\]
be the parametrization defined by
\[
\Phi_{\mathbf i}(\mathbf a)=B(\mathbf i,\mathbf a).
\]
Its inverse is denoted by
\[
\mathcal L_{\mathbf i}:\BB(w)\longrightarrow \mathbb Z_{\ge0}^\ell,
\]
and is called the \emph{Lusztig parametrization} associated with $\mathbf i$.

Any two reduced expressions of $w$ are connected by braid moves:
\begin{enumerate}
\item $2$-move: $ij\leftrightarrow ji$ if $d(i,j)=0$;
\item $3$-move: $iji\leftrightarrow jij$ if $c_{ij}c_{ji}=1$;
\item $4$-move: $ijij\leftrightarrow jiji$ if $c_{ij}c_{ji}=2$;
\item $6$-move: $ijijij\leftrightarrow jijiji$ if $c_{ij}c_{ji}=3$.
\end{enumerate}
In this paper, we only consider $2$-, $3$-moves.

For two reduced expressions $\mathbf i$ and $\mathbf i'$ of $w$, define the transition map
\[
\Phi_{\mathbf i}^{\mathbf i'}
:=
\Phi_{\mathbf i'}^{-1}\circ \Phi_{\mathbf i}.
\]

\begin{theorem}[{\cite[Proposition~5.2]{kamnitzer2010mirkovic}}]
\label{theo_moves}
Let $\mathbf a=(a_1,\dots,a_\ell)\in \mathbb Z_{\ge0}^\ell$.
\begin{enumerate}
\item Suppose that $\mathbf i'$ is obtained from $\mathbf i$ by a $2$-move at positions $(k,k+1)$. Then
\[
\Phi_{\mathbf i}^{\mathbf i'}(\mathbf a)_i
=
\begin{cases}
a_{k+1} & i=k,
\\
a_k & i=k+1,
\\
a_i & \text{otherwise}.
\end{cases}
\]

\item Suppose that $\mathbf i'$ is obtained from $\mathbf i$ by a $3$-move at positions $(k-1,k,k+1)$. Then
\[
\Phi_{\mathbf i}^{\mathbf i'}(\mathbf a)_i
=
\begin{cases}
a_k+a_{k+1}-p & i=k-1,
\\
p & i=k,
\\
a_{k-1}+a_k-p & i=k+1,
\\
a_i & \text{otherwise},
\end{cases}
\]
where $p=\min(a_{k-1},a_{k+1})$.
\end{enumerate}
\end{theorem}

\subsection{Quantum cluster algebras}
In this section, we recall the definition of quantum cluster algebras.
\subsubsection{Definition of quantum cluster algebras}
Let \( K = [1, r] \) be a finite set with a partition \( K = K^{\mathrm{ex}} \cup K^{\mathrm{fr}} \). Let \( B \) be an integer-valued \( K \times K^{\mathrm{ex}} \)-matrix whose principal part, \( B_{K^{\mathrm{ex}} \times K^{\mathrm{ex}}} \), is skew-symmetrizable. That is, there exists a diagonal matrix \( D' = \operatorname{diag}(d'_i) \) such that:
\[
d'_i b_{ij} = -d'_j b_{ji} \quad \text{for all } i, j \in K^{\mathrm{ex}}.
\]

Additionally, let \( \Lambda \) be a skew-symmetric \( K \times K \)-matrix. The pair \( (\Lambda, B) \) is called \emph{compatible} if it satisfies:
\[
\sum_{k \in K}  b_{ki}\lambda_{kj} = 2 d'_i \delta_{ij} \quad \text{for all } i \in K^{\mathrm{ex}}, \, j \in K.
\]

Given a skew-symmetric matrix \( \Lambda = (\lambda_{ij}) \), we define the \emph{quantum torus} \( \mathcal{T}_\Lambda \) as the algebra \( \mathbb{K}[X_i^{\pm 1}]_{i \in K} \), where \( \mathbb{K} = \mathbb{Z}[q^{\pm 1/2}] \), with relations:
\[
X_i X_j = q^{\lambda_{ij}} X_j X_i, \quad X_i X_i^{-1} = X_i^{-1} X_i = 1.
\]

For any vector \( \mathbf{a} = (a_1, \ldots, a_r) \in \mathbb{Z}^r \), we define the monomial:
\[
X^{\mathbf{a}} = q^{\frac{1}{2} \sum_{i > j} a_i a_j \lambda_{ij}} X_1^{a_1} \cdots X_r^{a_r}.
\]

A \emph{seed} is a tuple \( \mathbf{t} := \{ (X_i)_{i \in K}, \Lambda, B, K^{\mathrm{ex}} \} \), where \( (\Lambda, B) \) is a compatible pair. The variables \( X_i \) are called \emph{cluster variables}.

\noindent

Given a compatible pair $(\Lambda, B)$ and an element
$k\in K^{\operatorname{ex}}$, we define a new pair
\[
\mu_k(\Lambda,B)
=
(\mu_k\Lambda,\mu_k B)
:=
(E^T\Lambda E,E B F),
\]
where the matrices
$E=(e_{ij})_{i,j\in K}$ and
$F=(f_{ij})_{i,j\in K^{\operatorname{ex}}}$ are given by
\begin{equation}\label{eq:mutation-matrices}
e_{ij}:=
\begin{cases}
\delta_{ij}, & \text{if } j\ne k,\\
-1, & \text{if } i=j=k,\\
\max(0,-b_{ik}), & \text{if } i\ne j=k,
\end{cases}
\qquad
f_{ij}:=
\begin{cases}
\delta_{ij}, & \text{if } i\ne k,\\
-1, & \text{if } i=j=k,\\
\max(0,b_{kj}), & \text{if } i=k\ne j,
\end{cases}
\end{equation}

For \( k \in K^{\mathrm{ex}} \), the \emph{mutation} of the cluster variable $X_i$ at \( k \) is given by
\[
\mu_k(X_i) = \begin{cases} 
    X_i & \text{if } i \neq k, \\
    X^{\mathbf{a}} + X^{\mathbf{a}'} & \text{if } i = k,
\end{cases}
\]

where:
\[
\mathbf{a} = ([b_{1k}]_+, \ldots, [b_{k-1,k}]_+, -1, [b_{k+1,k}]_+, \ldots, [b_{rk}]_+),
\]
\[
\mathbf{a}' = ([-b_{1k}]_+, \ldots, [-b_{k-1,k}]_+, -1, [-b_{k+1,k}]_+, \ldots, [-b_{rk}]_+),
\]

and \( [a]_+ = \max\{0, a\} \).

It can be verified that \( (\mu_k(\Lambda), \mu_k(B)) \) remains a compatible pair, yielding a new seed:
\[
\mu_k(\mathbf{t}) := \{ (\mu_k(X_i))_{i \in K}, \mu_k(\Lambda), \mu_k(B), K^{\mathrm{ex}} \}.
\]

Let \( T\) denote the set of all seeds obtained from \( \mathbf{t} \) by any finite sequence of mutations.

\begin{definition}
For a seed \( \mathbf{t} \), the \emph{quantum cluster algebra} \( \mathcal A(\mathbf{t}) \) is the \( \mathbb{K} \)-subalgebra of \( \mathcal{T}_\Lambda \) generated by all cluster variables \( X_i(\mathbf{t}') \) for all seeds \( \mathbf{t}' \in T \). Note that, in our convention, frozen variables are not assumed to be invertible.
\end{definition}

\subsubsection{Morphisms of quantum cluster algebras}

Let \( \sigma: K \to K \) be a permutation of the set \( K \) such that \( \sigma(K^{\mathrm{ex}}) \subset K^{\mathrm{ex}} \). We define the matrices \( B_\sigma \) and \( \Lambda_\sigma \) as follows:
\[
b_{\sigma, ij} = b_{\sigma(i), \sigma(j)}, \quad \lambda_{\sigma, ij} = \lambda_{\sigma(i), \sigma(j)}.
\]

The tuple \( \sigma(\mathbf{t}) := \{ (X_{\sigma(i)})_{i \in K}, \Lambda_\sigma, B_\sigma, K^{\mathrm{ex}} \} \) forms a seed, and there exists an isomorphism:
\begin{equation}\label{eq_rhocluster}
\mathcal A(\sigma(\mathbf{t})) \cong \mathcal A(\mathbf{t}).
\end{equation}

For a seed \( \mathbf{t} = \{ (X_i)_{i \in K}, B_{K \times K^{\mathrm{ex}}}, \Lambda_{K \times K}, K^{\mathrm{ex}} \} \), let \( J \subset K \) and \( J^{\mathrm{ex}} \subset K^{\mathrm{ex}} \). Suppose the submatrix satisfies:
\begin{equation}\label{eq_BKJ}
B_{(K \setminus J) \times J^{\mathrm{ex}}} = 0.
\end{equation}
It follows that $(B_{J\times J^{\operatorname{ex}}},\Lambda_{J\times J})$ is a compatible pair.
We define the \emph{restricted seed}:
\[
\mathbf{t}_J := \{ (X_i)_{i \in J}, B_{J \times J^{\mathrm{ex}}}, \Lambda_{J \times J}, J^{\mathrm{ex}} \}.
\]
A direct verification gives the following proposition.
\begin{proposition}\label{pro:subcluster}
There exists a quantum cluster algebra embedding:
\[
\mathcal A(\mathbf{t}_J) \subset \mathcal A(\mathbf{t}).
\]
\end{proposition}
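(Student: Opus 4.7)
The plan is to exhibit the embedding explicitly as the natural map sending the generator $X_i$ of $\mathcal{T}_{\Lambda_{J \times J}}$ to the generator $X_i$ of $\mathcal{T}_\Lambda$ for $i \in J$, and then to show that every cluster variable of $\mathcal{A}(\mathbf{t}_J)$ obtained by a sequence of mutations at indices in $J^{\text{ex}}$ maps to a cluster variable of $\mathcal{A}(\mathbf{t})$ obtained by the corresponding sequence of mutations in the larger seed. First I would observe that the hypothesis $B_{(K \setminus J) \times J^{\text{ex}}} = 0$ implies that the inclusion of quantum tori is well-defined: for $i, j \in J$ the commutation relation $X_i X_j = q^{\lambda_{ij}} X_j X_i$ in $\mathcal{T}_{\Lambda_{J \times J}}$ agrees with the relation in $\mathcal{T}_\Lambda$, since $\Lambda_{J \times J}$ is literally the restriction of $\Lambda$.

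The core step is to prove that mutation at any $k \in J^{\text{ex}}$ commutes with the restriction operation. I would verify this separately for the three ingredients. For the exchange matrix, the formula for $\mu_k(B)_{ij}$ with $i \in J$, $j \in J^{\text{ex}}$ uses only the entries $B_{ik}$, $B_{kj}$, $B_{ij}$, all of which belong to $J \times J^{\text{ex}}$, so the computation in the large seed coincides with that in the small seed. For the $\Lambda$-matrix, the formula \eqref{eq_mutations} involves a sum $\sum_{l \in K} \max\{0, -b_{lk}\} \lambda_{lj}$; but since $k \in J^{\text{ex}}$, the hypothesis forces $b_{lk} = 0$ for $l \in K \setminus J$, so only the $l \in J$ terms contribute, matching the restricted computation. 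For the cluster variable $\mu_k(X_k)$, the exponent vectors $\mathbf{a}, \mathbf{a}'$ have entries $[\pm b_{ik}]_+$ which vanish whenever $i \in K \setminus J$, so $\mu_k(X_k) = X^{\mathbf{a}} + X^{\mathbf{a}'}$ is a Laurent polynomial in the $X_i$ with $i \in J$ only, and the $q$-prefactor from the ordered monomial depends only on $\Lambda_{J \times J}$.

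Next I would verify that the condition $B_{(K \setminus J) \times J^{\text{ex}}} = 0$ is preserved under mutation at any $k \in J^{\text{ex}}$, so that the argument iterates. For $i \in K \setminus J$ and $j \in J^{\text{ex}}$ with $j \neq k$, the formula gives $\mu_k(B)_{ij} = B_{ij} + (-1)^{\delta(B_{ik}<0)} \max\{B_{ik}B_{kj}, 0\} = 0$, since both $B_{ij}$ and $B_{ik}$ vanish by hypothesis; for $j = k$ the entry becomes $-B_{ik} = 0$ as well. Hence the restricted seed structure is stable under the subclass of mutations relevant to $\mathbf{t}_J$.

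By induction on the length of a mutation sequence in $J^{\text{ex}}$, every seed reachable from $\mathbf{t}_J$ corresponds (via restriction) to a seed reachable from $\mathbf{t}$ by the same sequence of mutations, and each cluster variable of the former is literally a cluster variable of the latter. This yields a well-defined $\mathbb{K}$-algebra homomorphism $\mathcal{A}(\mathbf{t}_J) \to \mathcal{A}(\mathbf{t})$. Injectivity follows from the fact that the map of quantum tori $\mathcal{T}_{\Lambda_{J \times J}} \hookrightarrow \mathcal{T}_\Lambda$ is already injective and $\mathcal{A}(\mathbf{t}_J) \subset \mathcal{T}_{\Lambda_{J \times J}}$. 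The main obstacle is purely bookkeeping, namely tracking the $q$-prefactors in the mutation of cluster variables and confirming that the definition of $X^{\mathbf{a}}$ via the quadratic form $\sum_{i>j} a_i a_j \lambda_{ij}$ is unaffected by extending the index set from $J$ to $K$ when the extra components of $\mathbf{a}$ are zero; this is immediate but is the one place where one has to be careful about conventions.
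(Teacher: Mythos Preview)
The paper states this proposition without proof, so there is no argument to compare against. Your proposal is correct and is precisely the standard argument one would expect: the vanishing hypothesis $B_{(K\setminus J)\times J^{\text{ex}}}=0$ ensures that mutation at any $k\in J^{\text{ex}}$ involves only the data indexed by $J$, and you correctly check that this hypothesis is preserved under such mutations so that the argument iterates. The verification of the three ingredients (exchange matrix, $\Lambda$-matrix, mutated cluster variable) is accurate, and the injectivity via the ambient quantum torus embedding is the right way to finish.
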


\section{Bosonic extensions of quantum coordinate rings}
\label{sec_bosonic}

In this section, we recall the construction of bosonic extensions of quantum coordinate rings following \cite{kashiwara2024global,kashiwara2024braid}. We first review the definition of the bosonic extension and its global basis, and then recall the braid group action and the associated PBW basis.

\subsection{Bosonic extensions}

Let $C=(c_{ij})_{i,j\in I}$ be a symmetrizable generalized Cartan matrix, and let $\widehat{\mathcal A}$ denote the bosonic extension associated with $C$.

\begin{definition}[{\cite{kashiwara2024global}}]
The algebra $\widehat{\mathcal A}$ is the $\mathbb K$-algebra generated by
\[
\{x_{i,n}\mid i\in I,\ n\in \mathbb Z\},
\]
subject to the relations
\begin{align}
\sum_{r+s=-c_{ij}}(-1)^r&
\begin{bmatrix}
1-c_{ij}\\ r
\end{bmatrix}_{i}
 x_{i,n}^r x_{j,n} x_{i,n}^s =0,
\label{eq_bosonic_serre}
\\
 x_{i,n}x_{j,n+1}
 &=
 q^{(\alpha_i,\alpha_j)}x_{j,n+1}x_{i,n}
 +\delta_{ij}(1-q_i^2),
\label{eq_bosonic_adjacent}
\\
 x_{i,n}x_{j,m}
 &=
 q^{(-1)^{m-n+1}(\alpha_i,\alpha_j)}x_{j,m}x_{i,n}
 \qquad (m-n>1).
\label{eq_bosonic_far}
\end{align}
\end{definition}

The algebra $\widehat{\mathcal A}$ is naturally $Q^+$-graded by
\[
\wt(x_{i,n})=(-1)^n\alpha_i.
\]

For an interval $[a,c]\subset \mathbb Z$, let $\widehat{\mathcal A}[a,c]$ denote the subalgebra generated by
\[
\{x_{i,n}\mid i\in I,\ a\le n\le c\}.
\]
We write $\widehat{\mathcal A}[k]:=\widehat{\mathcal A}[k,k]$.

For elements $(x_c,\dots,x_a)$, define the ordered product
\[
\overrightarrow{\prod}_{k\in[a,c]}x_k
:=
 x_cx_{c-1}\cdots x_a.
\]

\begin{proposition}[{\cite[Corollary~4.4]{kashiwara2024global}}]
\label{prop_tensor_decomposition}
The following statements hold.
\begin{enumerate}
\item For every $k\in \mathbb Z$, there is an algebra isomorphism
\[
\widehat{\mathcal A}[k]\simeq A_q(\mathfrak n).
\]

\item For every interval $[a,c]$, the multiplication map induces a vector space isomorphism
\[
\widehat{\mathcal A}[c]\otimes \widehat{\mathcal A}[c-1]\otimes \cdots \otimes \widehat{\mathcal A}[a]
\xrightarrow{\sim}
\widehat{\mathcal A}[a,c],
\]
\[
 x_c\otimes x_{c-1}\otimes \cdots \otimes x_a
 \longmapsto
 \overrightarrow{\prod}_{k\in[a,c]}x_k.
\]
\end{enumerate}
\end{proposition}

We next recall the bar involution and the twisted duality map on $\widehat{\mathcal A}$. Define a $\mathbb Q$-algebra anti-automorphism
\[
\overline{\phantom{x}}: \widehat{\mathcal A}\to \widehat{\mathcal A}
\]
by
\[
\overline{q^{\pm1/2}}=q^{\mp1/2},
\qquad
\overline{x_{i,n}}=x_{i,n}.
\]

For a homogeneous element $x\in \widehat{\mathcal A}$, define
\begin{equation}
\label{eq_twisted_bar}
 c(x)
 :=
 q^{(\wt(x),\wt(x))/2}\overline{x}.
\end{equation}

We also define the shift automorphism
\begin{equation}
\label{eq_shift_automorphism}
\mathscr D(x_{i,n})=x_{i,n+1}.
\end{equation}

\subsubsection{Extended crystal basis}

Let $\mathbf B(\infty)$ be the dual canonical basis of $A_q(\mathfrak n)$. Define
\[
\widehat{\mathbf B}(\infty)
:=
\Bigl\{(b_k)_{k\in \mathbb Z}
\ \Big|\ 
 b_k\in \mathbf B(\infty),
\ b_k=1 \text{ for all but finitely many } k
\Bigr\}.
\]

The Kashiwara operators
\[
\widetilde e_i,\widetilde f_i,
\qquad
\widetilde e_i^*,\widetilde f_i^*
\]
act on $\widehat{\mathbf B}(\infty)$ and endow it with a crystal structure; see \cite{kashiwara2024global}.

For
\[
\mathbf b=(b_k)_{k\in \mathbb Z}\in \widehat{\mathbf B}(\infty),
\]
define
\[
P(\mathbf b)
:=
\overrightarrow{\prod}_{k\in \mathbb Z}
\psi_k\bigl(G^{\mathrm{up}}(b_k)\bigr),
\]
where
\[
\psi_k:A_q(\mathfrak n)\xrightarrow{\sim}\widehat{\mathcal A}[k]
\]
is the canonical isomorphism.

\begin{theorem}[{\cite[Theorem~6.6]{kashiwara2024global}}]
\label{thm_global_basis_hatA}
For every $\mathbf b\in \widehat{\mathbf B}(\infty)$, there exists a unique element
\[
G(\mathbf b)\in \widehat{\mathcal A}
\]
such that
\begin{align}
 c(G(\mathbf b))&=G(\mathbf b),
\\
 P(\mathbf b)-G(\mathbf b)
 &\in
 \sum_{\mathbf b'\prec \mathbf b}
 q\mathbb Z[q]P(\mathbf b').
\end{align}
Here $\prec$ denotes the order on $\widehat{\mathbf B}(\infty)$ introduced in \cite[Section~6]{kashiwara2024global}.
\end{theorem}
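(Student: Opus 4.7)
The plan is to adapt the classical Kazhdan--Lusztig construction used for the dual canonical basis of $A_q(\mathfrak{n})$ (compare Theorem \ref{theo_canonical}) to the extended setting. The argument splits into a triangularity statement for the involution $c$ acting on $\{P(\mathbf{b})\}_{\mathbf{b}\in\widehat{\mathbf{B}}(\infty)}$ and a standard induction on $\prec$; once the former is established, existence and uniqueness follow from Lusztig's lemma on bar-invariant elements over $\mathbb{Z}[q^{\pm 1/2}]$.

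For uniqueness, suppose $G_1$ and $G_2$ both satisfy conditions (1) and (2). The difference $\Delta:=G_1-G_2$ is $c$-invariant and lies in $\sum_{\mathbf{b}'\prec\mathbf{b}} q\mathbb{Z}[q]\,P(\mathbf{b}')$. Choose a $\prec$-maximal index $\mathbf{b}_0$ whose coefficient $\gamma_{\mathbf{b}_0}$ in $\Delta$ is nonzero. Once triangularity of $c$ on the $P$-basis is known, comparing the $P(\mathbf{b}_0)$-coefficients on both sides of $c(\Delta)=\Delta$ yields $\overline{\gamma_{\mathbf{b}_0}}=\gamma_{\mathbf{b}_0}$. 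Since $q\mathbb{Z}[q]\cap q^{-1}\mathbb{Z}[q^{-1}]=0$, this forces $\gamma_{\mathbf{b}_0}=0$, a contradiction. For existence I would induct on $\mathbf{b}$ with respect to $\prec$: assuming $G(\mathbf{b}')$ has been constructed for every $\mathbf{b}'\prec\mathbf{b}$, expand
$$c(P(\mathbf{b}))=P(\mathbf{b})+\sum_{\mathbf{b}'\prec\mathbf{b}}\kappa_{\mathbf{b}',\mathbf{b}}(q)\,P(\mathbf{b}'),\qquad \kappa_{\mathbf{b}',\mathbf{b}}\in\mathbb{Z}[q^{\pm 1/2}],$$
and apply Lusztig's lemma to solve uniquely for $\eta_{\mathbf{b}',\mathbf{b}}\in q\mathbb{Z}[q]$ so that $G(\mathbf{b}):=P(\mathbf{b})-\sum_{\mathbf{b}'\prec\mathbf{b}}\eta_{\mathbf{b}',\mathbf{b}}(q)\,G(\mathbf{b}')$ is $c$-invariant. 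By construction it also satisfies condition (2).

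The main obstacle is the triangularity step: showing that $c(P(\mathbf{b}))\equiv P(\mathbf{b})$ modulo $\sum_{\mathbf{b}'\prec\mathbf{b}}\mathbb{Z}[q^{\pm 1/2}]\,P(\mathbf{b}')$. Since $P(\mathbf{b})=\overrightarrow{\prod}_{k}\psi_k(G^{\text{up}}(b_k))$ is an ordered product across the different slices $\widehat{\mathcal{A}}[k]$, applying $c$ reverses the order and one must use the commutation relations \eqref{eq:rel2} and \eqref{eq:rel3} to restore the original ordering; this produces cross terms controlled by the normalizing factor $q^{(\operatorname{wt}(x),\operatorname{wt}(x))/2}$ built into \eqref{eq_c}. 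Within each slice $\widehat{\mathcal{A}}[k]$ the isomorphism $\widehat{\mathcal{A}}[k]\cong A_q(\mathfrak{n})$ intertwines $c$ with the classical bar-involution, so Lemma \ref{lem_prodcanonical} provides a triangular expansion of any product of $G^{\text{up}}(b_k)$'s in the dual canonical basis of $A_q(\mathfrak{n})$. Combining these two ingredients while tracking how the extended crystal order $\prec$ refines the slice-wise orders on $\mathbf{B}(\infty)$ is the technical heart of the argument, and this bookkeeping--in particular verifying that the $\delta_{i,j}(1-q_i^2)$ defect in \eqref{eq:rel2} contributes only strictly $\prec$-lower terms--is what I would expect to occupy most of the work.
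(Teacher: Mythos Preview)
The paper does not supply a proof of this theorem: it is quoted verbatim as Theorem~6.6 of \cite{kashiwara2024global} and used as a black box. Consequently there is no ``paper's own proof'' to compare your proposal against.

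That said, your outline is the standard and correct route to such a statement, and it is essentially the argument carried out in \cite{kashiwara2024global}. The uniqueness step is fine as written. For existence, the substantive content is exactly the triangularity $c(P(\mathbf{b}))\in P(\mathbf{b})+\sum_{\mathbf{b}'\prec\mathbf{b}}\mathbb{Z}[q^{\pm1}]P(\mathbf{b}')$ that you isolate. One point to be careful about: the order $\prec$ on $\widehat{\mathbf{B}}(\infty)$ in \cite{kashiwara2024global} is not simply a lexicographic refinement of the slice-wise orders on $\mathbf{B}(\infty)$, so your last paragraph understates the bookkeeping involved. In particular, the cross terms produced by commuting factors from different slices (via \eqref{eq:rel2}) change the crystal data in adjacent slices simultaneously, and verifying that the resulting $\mathbf{b}'$ are $\prec$-smaller uses the specific definition of $\prec$ in \cite[Section~6]{kashiwara2024global} rather than anything deducible from Lemma~\ref{lem_prodcanonical} alone. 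You also need that $\prec$ is well-founded (each $\mathbf{b}$ dominates only finitely many $\mathbf{b}'$, or at least that descending chains terminate) for the induction to make sense; this is established in \cite{kashiwara2024global} but is not automatic. With those two caveats, your plan is correct.
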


\subsubsection{Braid group symmetries}

We now recall the braid group symmetries on $\widehat{\mathcal A}$ introduced in \cite{kashiwara2024braid}.

For each $i\in I$, define an algebra automorphism
\[
T_i:\widehat{\mathcal A}\to \widehat{\mathcal A}
\]
by
\[
T_i(x_{j,n})=
\begin{cases}
 x_{i,n+1}
 & \text{if } j=i,
\\
\dfrac{
\sum_{r+s=-c_{ij}}(-1)^r q_i^{c_{ij}/2+r}
 x_{i,n}^{(s)}x_{j,n}x_{i,n}^{(r)}
}{(q_i^{-1}-q_i)^{-c_{ij}}}
 & \text{if } j\neq i.
\end{cases}
\]

Similarly, the inverse automorphism $T_i^{-1}$ is given by
\[
T_i^{-1}(x_{j,n})=
\begin{cases}
 x_{i,n-1}
 & \text{if } j=i,
\\
\dfrac{
\sum_{r+s=-c_{ij}}(-1)^r q_i^{c_{ij}/2+r}
 x_{i,n}^{(r)}x_{j,n}x_{i,n}^{(s)}
}{(q_i^{-1}-q_i)^{-c_{ij}}}
 & \text{if } j\neq i.
\end{cases}
\]

These operators satisfy the braid relations:
\[
T_iT_i^{-1}=T_i^{-1}T_i=\mathrm{Id},
\]
\[
\underbrace{T_i^{\pm}T_j^{\pm}\cdots}_{m_{ij}}
=
\underbrace{T_j^{\pm}T_i^{\pm}\cdots}_{m_{ij}}
\qquad (i\neq j),
\]
where
\[
m_{ij}=
\begin{cases}
 c_{ij}c_{ji}+2 & \text{if } c_{ij}c_{ji}\le2,
\\
6 & \text{if } c_{ij}c_{ji}=3,
\\
\infty & \text{otherwise}.
\end{cases}
\]

By construction, there exists a group morphism $T$ from positive braid group $\Br^+$ to the automorphism group $\operatorname{Aut}(\widehat{\cA})$. For braid group element $b\in \Br^+$, we denote by $T_b$ the image of $b$ under the morphism $T$.

\begin{theorem}[{\cite[Theorem~3.7]{kashiwara2024braid}}]
\label{thm_braid_on_basis}
For every $i\in I$, the automorphism $T_i$ induces a bijection
\[
T_i:\widehat{\mathbf B}(\infty)
\xrightarrow{\sim}
\widehat{\mathbf B}(\infty).
\]
\end{theorem}

\subsubsection{PBW basis}

Let $b\in \Br^+$ with the length $\ell=\ell(b)$ and fix an expression of $b$
\[
\mathbf i=(i_1,\dots,i_\ell).
\]
For $1\le k\le \ell$, define
\[
T_{b_{\le k}}:=T_{i_1}\cdots T_{i_k},
\qquad
T_{b_{\le0}}:=\mathrm{Id}.
\]

The associated root vectors are defined by
\[
E_k^{\bfi}
:=
 T_{b_{\le k-1}}(q_i^{1/2}x_{i_k,0}).
\]

\begin{definition}
The algebra $\widehat{\mathcal A}(b)$ is the subalgebra of $\widehat{\mathcal A}$ generated by
\[
\{E_k^{\bfi}\mid 1\le k\le \ell\}.
\]
\end{definition}

The algebra $\widehat{\mathcal A}(b)$ is independent of the choice of expression of $b$.

For
\[
\mathbf a=(a_1,\dots,a_\ell)\in \mathbb Z_{\ge0}^\ell,
\]
define the PBW monomial
\[
E(\bfi,\mathbf a)
:=
\Bigl(\prod_{k=1}^\ell q^{a_k(a_k-1)}\Bigr)
(E_\ell^{\bfi})^{a_\ell}\cdots (E_1^{\bfi})^{a_1}.
\]

\begin{proposition}[{\cite[Proposition~4.7 and Lemma~4.16]{kashiwara2024braid}}]
\label{prop_pbw_global}
Let $b\in \Br^+$. Then:
\begin{enumerate}
\item
\[
\widehat{\mathcal A}(b)
=
\widehat{\mathcal A}_{\ge0}
\cap
T_b(\widehat{\mathcal A}_{<0}).
\]

\item Given an expression $\bfi$ of $b$. The set
\[
\{E(\bfi,\mathbf a)\mid \mathbf a\in \mathbb Z_{\ge0}^\ell\}
\]
forms a basis of $\widehat{\mathcal A}(b)$, called the \emph{PBW basis}.

\item There exists a unique basis
\[
\{B(\mathbf i,\mathbf a)\mid \mathbf a\in \mathbb Z_{\ge0}^\ell\}
\]
of $\widehat{\mathcal A}(b)$ such that
\begin{align}\label{eq_pbw_global}
 c(B(\mathbf i, \mathbf a))&=B(\mathbf i,\mathbf a),
\\
 E(\bfi, \mathbf a)
 &=
 B(\bfi,\mathbf a)
 +
 \sum_{\mathbf c\prec \mathbf a}
 g_{\mathbf c}(q)B(\bfi,\mathbf c),
\end{align}
where $g_{\mathbf c}(q)\in q\mathbb Z[q]$.

This basis is called the \emph{global basis} of $\widehat{\mathcal A}(b)$, and it is contained in the extended crystal basis $\widehat{\mathbf B}(\infty)$. We denote by $\cL_\bfi(B)$ the $\bfi$-Lusztig parameter of the global basis element $B\in \widehat{\cA}(b)$. We denote by $\BB(b)$ the global basis of $\widehat{\cA}(b)$.
\end{enumerate}
\end{proposition}

\begin{lemma}\label{lem_prodglobal}
    For an expression $\bfi$ of $b\in \Br^+$, we have 
    \[B(\bfi,\mathbf{a})B(\bfi,\mathbf{b})=q^AB(\bfi,\bfa+\bb)+\sum_{\mathbf{c}\prec \bfa+\bb}f_{\mathbf{c}}(q)B(\bfi,\mathbf{c}).\]
\end{lemma}
\begin{proof}
  The proof is similar to that of Lemma~\ref{lem_prodcanonical}. Note that the PBW basis satisfies the Levendorskii--Soibelman formula
\eqref{eq_prod_pbw}.
\end{proof}

\begin{lemma}
\label{lem:Tu-global-basis}
Let $\bfi=(i_1,\dots,i_m)$ be an expression of
$b\in\operatorname{Br}^+$, and let $\bfj=(j_1,\dots,j_n)$ be an
expression of $u\in\operatorname{Br}^+$. Set
\[
\bfk:=\bfj\bfi=(j_1,\dots,j_n,i_1,\dots,i_m).
\]
Let $\bfa\in\mathbb Z_{\ge0}^{m}$ be regarded as a $\bfk$-parameter
supported on the subword $\bfi$. Then
\[
T_u\bigl(B(\bfi,\bfa)\bigr)=B(\bfk,\bfa).
\]
\end{lemma}

\begin{proof}
We first prove the statement for PBW monomials. For $1\le r\le m$, let
$\bfe_r$ be the vector whose $r$-th coordinate is $1$ and whose other
coordinates are $0$. Then
\[
B(\bfi,\bfe_r)=E_r^{\bfi}.
\]
By the definition of root vectors, applying the braid symmetry $T_u$
sends the root vector attached to the $r$-th position of $\bfi$ to the
corresponding root vector attached to the same position inside the
concatenated word $\bfk=\bfj\bfi$. Hence
\[
T_u(E_r^{\bfi})=E_r^{\bfk}.
\]
It follows multiplicatively that, for every parameter $\bfa$ supported on
the subword $\bfi$,
\begin{equation}
\label{eq:Tu-PBW}
T_u\bigl(E(\bfi,\bfa)\bigr)=E(\bfk,\bfa).
\end{equation}

We now pass from PBW monomials to global basis elements. By the triangular relation between PBW monomials
and global basis elements in Proposition~\ref{prop_pbw_global}(3), we have
\[
E(\bfi,\bfa)
=
B(\bfi,\bfa)
+
\sum_{\bfa'\prec\bfa}
g_{\bfa'}(q)B(\bfi,\bfa'),
\qquad
g_{\bfa'}(q)\in q\mathbb Z[q].
\]
Applying $T_u$ and using \eqref{eq:Tu-PBW}, we obtain
\[
E(\bfk,\bfa)
=
T_u(B(\bfi,\bfa))
+
\sum_{\bfa'\prec\bfa}
g_{\bfa'}(q)T_u(B(\bfi,\bfa')).
\]
Since each $T_u(B(\bfi,\bfa'))$ is again a global basis element by Theorem \ref{thm_braid_on_basis} and 
the triangular expansion of $E(\bfk,\bfa)$ with respect to the global
basis is unique, its leading term must be $B(\bfk,\bfa)$. Therefore
\[
T_u(B(\bfi,\bfa))=B(\bfk,\bfa),
\]
as desired.
\end{proof}

\subsubsection{Lusztig parameters of global basis}
For an expression \( \mathbf{i} \) of \( b \), we define the isomorphism:
\[
\Psi_{\mathbf{i}}: \mathbb{Z}_{\geq 0}^{[1, \ell(b)]} \to \BB(b),
\]
where \( \BB(b) \) denotes the global basis of \( \widehat{\mathcal A}(b) \). For another expression \( \mathbf{i}' \) of \( b \), we define the transition map:
\[
\Psi_{\mathbf{i}}^{\mathbf{i}'} := \Psi_{\mathbf{i}'}^{-1} \circ \Psi_{\mathbf{i}}: \mathbb{Z}_{\geq 0}^{[1, \ell(b)]} \to \mathbb{Z}_{\geq 0}^{[1, \ell(b)]}.
\]

\begin{theorem}\label{theo_braidmoves}
Let \(\mathbf i=(i_1,\dots,i_\ell)\) be an expression of
\(b\in\Br^+\), and let \(\mathbf i'\) be obtained from \(\mathbf i\) by a
single braid move. Then the transition map
\[
\Psi_{\mathbf i}^{\mathbf i'}:\mathbb Z_{\geq 0}^{\ell}
\longrightarrow
\mathbb Z_{\geq 0}^{\ell}
\]
is given as follows.

\begin{enumerate}
\item
If \(\mathbf i'\) is obtained from \(\mathbf i\) by a \(2\)-move at
positions \((k,k+1)\), with
\[
c_{i_k i_{k+1}}=0,
\]
then
\[
\Psi_{\mathbf i}^{\mathbf i'}(\mathbf a)_t=
\begin{cases}
a_{k+1} & t=k,\\
a_k & t=k+1,\\
a_t & \text{otherwise}.
\end{cases}
\]

\item
If \(\mathbf i'\) is obtained from \(\mathbf i\) by a \(3\)-move
\[
(i,j,i)\longleftrightarrow (j,i,j)
\]
at positions \((k-1,k,k+1)\), with
\[
c_{ij}c_{ji}=1,
\]
then
\[
\Psi_{\mathbf i}^{\mathbf i'}(\mathbf a)_t=
\begin{cases}
a_k+a_{k+1}-p & t=k-1,\\
p & t=k,\\
a_{k-1}+a_k-p & t=k+1,\\
a_t & \text{otherwise},
\end{cases}
\]
where
\[
p=\min(a_{k-1},a_{k+1}).
\]
\end{enumerate}
\end{theorem}

\begin{proof}
We consider the two possible braid moves.

\smallskip

\noindent
\textbf{The \(2\)-move case.}
Assume that \(\mathbf i'\) is obtained from \(\mathbf i\) by interchanging
\(i_k\) and \(i_{k+1}\), where
\[
c_{i_k i_{k+1}}=0.
\]
Then the corresponding root vectors satisfy
\[
E^{\mathbf i}_k=E^{\mathbf i'}_{k+1},
\qquad
E^{\mathbf i}_{k+1}=E^{\mathbf i'}_k.
\]
Moreover, these two root vectors \(q\)-commute. Hence, with the normalized
PBW monomial convention, one has
\[
E(\mathbf i,\mathbf a)
=
E\bigl(\mathbf i',\Psi_{\mathbf i}^{\mathbf i'}(\mathbf a)\bigr),
\]
where \(\Psi_{\mathbf i}^{\mathbf i'}\) simply interchanges the \(k\)-th
and \((k+1)\)-st coordinates. By the triangularity and uniqueness of the
global basis element with a given leading PBW term, we obtain
\[
B(\mathbf i,\mathbf a)
=
B\bigl(\mathbf i',\Psi_{\mathbf i}^{\mathbf i'}(\mathbf a)\bigr).
\]
This proves the formula in the \(2\)-move case.

\smallskip

\noindent
\textbf{The \(3\)-move case.}
Assume that \(\mathbf i'\) is obtained from \(\mathbf i\) by replacing
\[
(i,j,i)
\]
with
\[
(j,i,j)
\]
at positions \((k-1,k,k+1)\), where
\[
c_{ij}c_{ji}=1.
\]
Let
\[
\mathbf j=(i,j,i),
\qquad
\mathbf j'=(j,i,j).
\]
For the rank-two subword, Theorem~\ref{theo_moves} gives
\[
B(\mathbf j,(a_{k-1},a_k,a_{k+1}))
=
B\bigl(
\mathbf j',
(a_k+a_{k+1}-p,\ p,\ a_{k-1}+a_k-p)
\bigr),
\]
where
\[
p=\min(a_{k-1},a_{k+1}).
\]

We first assume that \(\mathbf a\) is supported on the block
\([k-1,k+1]\). Then
\[
E(\mathbf i,\mathbf a)
=
T_{b_{<k-1}}
\bigl(E(\mathbf j,(a_{k-1},a_k,a_{k+1}))\bigr),
\]
and similarly
\[
E\bigl(\mathbf i',\Psi_{\mathbf i}^{\mathbf i'}(\mathbf a)\bigr)
=
T_{b_{<k-1}}
\bigl(
E(\mathbf j',
(a_k+a_{k+1}-p,\ p,\ a_{k-1}+a_k-p))
\bigr).
\]
By Proposition~\ref{prop_pbw_global}(3) and
Theorem~\ref{thm_braid_on_basis}, the braid operator \(T_{b_{<k-1}}\)
sends the corresponding rank-two global basis element to the global basis
element in the full word. Therefore the rank-two identity implies
\[
B(\mathbf i,\mathbf a)
=
B\bigl(\mathbf i',\Psi_{\mathbf i}^{\mathbf i'}(\mathbf a)\bigr)
\]
for all \(\mathbf a\) supported on the block.

For a general \(\mathbf a\), write
\[
\mathbf a
=
\mathbf a_{<k-1}
+
\mathbf a_{[k-1,k+1]}
+
\mathbf a_{>k+1}.
\]
The root vectors outside the local block are unchanged by the \(3\)-move,
because the two subwords \((i,j,i)\) and \((j,i,j)\) represent the same
braid group element. Hence the corresponding global basis elements
supported outside the block are the same for \(\mathbf i\) and
\(\mathbf i'\).

By the triangular factorization property of the global basis with respect
to the PBW order, we have
\[
\begin{aligned}
&
B(\mathbf i,\mathbf a_{>k+1})
B(\mathbf i,\mathbf a_{[k-1,k+1]})
B(\mathbf i,\mathbf a_{<k-1})
\\
&\qquad =
B(\mathbf i,\mathbf a)
+
\sum_{\mathbf a'\prec \mathbf a}
f_{\mathbf a'}(q)B(\mathbf i,\mathbf a'),
\end{aligned}
\]
with \(f_{\mathbf a'}(q)\in \mathbb Z[q^\pm]\). Likewise,
\[
\begin{aligned}
&
B(\mathbf i',\mathbf a_{>k+1})
B\bigl(
\mathbf i',
\Psi_{\mathbf i}^{\mathbf i'}(\mathbf a_{[k-1,k+1]})
\bigr)
B(\mathbf i',\mathbf a_{<k-1})
\\
&\qquad =
B\bigl(\mathbf i',\Psi_{\mathbf i}^{\mathbf i'}(\mathbf a)\bigr)
+
\sum_{\mathbf b\prec \Psi_{\mathbf i}^{\mathbf i'}(\mathbf a)}
g_{\mathbf b}(q)B(\mathbf i',\mathbf b),
\end{aligned}
\]
with \(g_{\mathbf b}(q)\in \mathbb Z[q^\pm]\).

The outer factors in the two products agree, and the middle factors agree
by the rank-two case. Hence the two products are equal. By the uniqueness
of the leading term in the triangular global-basis expansion, their leading
global basis elements must agree. Therefore
\[
B(\mathbf i,\mathbf a)
=
B\bigl(\mathbf i',\Psi_{\mathbf i}^{\mathbf i'}(\mathbf a)\bigr).
\]
This proves the \(3\)-move formula.
\end{proof}

\subsection{Quantum cluster algebra associated with words}

In this section, we introduce quantum minors associated with expressions of braid group elements. These elements are expected to provide the cluster variables in the quantum cluster structures on Bosonic extension algebras.

Fix a word
\[
\mathbf i=(i_1,\dots,i_\ell)
\]
of $I$. We denote by $[1,\ell]$ the set of integers $k$ with $1\leq k\leq \ell$.  For $a\in[1,\ell]$, we call $i_a$ the \emph{color} of $a$. For
$1\le a\le c\le \ell$, the interval $[a,c]$ is called an \emph{$i$-box}
if $i_a=i_b$.

We use the following notation. For $a\in[1,\ell]$ and $j\in I$, set
\begin{align}
\label{eq:notationa+a-}
a^-&:=\max\bigl(\{k<a\mid i_k=i_a\}\cup\{-\infty\}\bigr),
&
a^+&:=\min\bigl(\{k>a\mid i_k=i_a\}\cup\{\ell+1\}\bigr),
\\
a(j)^-&:=\max\bigl(\{k<a\mid i_k=j\}\cup\{-\infty\}\bigr),
&
a(j)^+&:=\min\bigl(\{k>a\mid i_k=j\}\cup\{\ell+1\}\bigr),
\\
a_{\max}&:=\max\{k\in[1,\ell]\mid i_k=i_a\},
&
a_{\min}&:=\min\{k\in[1,\ell]\mid i_k=i_a\}.
\end{align}
Thus $a^-$ and $a^+$ are the predecessor and successor of $a$ among
vertices of the same color, while $a(j)^-$ and $a(j)^+$ are the
predecessor and successor of $a$ among vertices of color $j$.

For simplicity, we sometimes write
\[
a^+(j)^-:=(a^+)(j)^-.
\]
In other words, $a^+(j)^-$ denotes the predecessor of $a^+$ among
vertices of color $j$, whenever this vertex exists.

It is often convenient to write
\[
a=(i_a,n)
\]
if $a$ is the $n$-th occurrence of the color $i_a$ in the word $\bfi$.
With this convention, if $a=(i_a,n)$, then
\[
a^-=(i_a,n-1).
\]
More generally, we write
\[
a^{-k}:=(i_a,n-k)
\]
whenever this vertex exists.
Finally, for $j\in I$, let $n_j$ denote the number of vertices of color
$j$ in the word $\bfi$.

Let $\bfi=(i_1,\dots,i_\ell)$ be an expression of
$b\in \Br^+$. An interval $[a,c]\subset [1,\ell]$ is called an \emph{$i$-box} if
$i_a=i_c$. If $i_a\neq i_c$, we define $[a,c\}=[a,d]$ where $d$ is the maximal index less than $c$ with $i_d=i_a$. Associated with an $i$-box $[a,c]$, define
\[
\bfi[a,c]=(a_1,\dots,a_\ell)\in \mathbb Z_{\ge0}^\ell
\]
by
\[
a_k=
\begin{cases}
1 & \text{if } a\le k\le c \text{ and } i_k=i_a,
\\
0 & \text{otherwise}.
\end{cases}
\]

\begin{definition}
The \emph{quantum minor} associated with $[a,c]$ is the global basis element for vector $\bfi[a,c]$,
\[
 D_\bfi[a,c]:=B(\bfi,\bfi[a,c]).
\]
\end{definition}
For simplicity, we denote by $D_{\bfi,s}$ the quantum minor $D_\bfi[s,\ell(b)\}$.
Set
\[
K^{\mathrm{ex}}
=
\{k\in [1,\ell]\mid k^- \text{ exists}\},
\qquad
K^{\mathrm{fr}}
=
[1,\ell]\setminus K^{\mathrm{ex}}.
\]

We next recall the exchange matrix associated with $\bfi$. Define
\[
B_{\bfi}=(b_{kl})_{K\times K^{\operatorname{ex}}}
\]
by
\begin{equation}
\label{eq_braidmatrix}
b_{kl}=
\begin{cases}
1 & \text{if } l=k^-,
\\
-1 & \text{if } l=k^+,
\\
c_{i_k i_l}
& \text{if } l^-<k^-<l<k,
\\
-c_{i_k i_l}
& \text{if } k^-<l^-<k<l,
\\
0 & \text{otherwise}.
\end{cases}
\end{equation}
and 
\[\Lambda_\bfi=(\lambda_{ij})_{K\times K}\]
by 
\[\lambda_{kl}=-\lambda_{lk}=-(\varpi_{i_k}-w_{k}\varpi_{i_k},\varpi_{i_l}+w_l\varpi_{i_l}) \text{ for }k\leq l\]
By \cite[Proposition~1.2]{fujita2023isomorphisms}, we have $(B_\bfi,\Lambda_\bfi)$ is a compatible pair. The quiver $Q_{\bfi}$ is defined by the exchange matrix $B_{\bfi}=(b_{kl})$. Its vertex set is $[1,\ell]$, and for each pair $k,l\in[1,\ell]$ with $b_{kl}>0$, we draw $b_{kl}$ arrows from $k$ to $l$. If $c_{ij}c_{ji}\leq 1$, then the quiver $Q_{\bfi}$ has two types of arrows: \emph{horizontal arrows} $k\to k^-$, and\emph{ ordinary arrows} $k\to l$ satisfying $k^-<l^-<k<l$ and $c_{i_k i_l}=-1$.
\begin{remark}
\label{rem_BGLS}
More generally, for any infinite word $\bfi$ over $I$, one can define the matrix $B_{\bfi}$ by the same formula \eqref{eq_braidmatrix}. In this case all vertices are exchangeable.

 Our convention the compatible pair $(B_\bfi,\Lambda_\bfi)$ is the compatible pair $(-B_\bfi,-\Lambda_\bfi)$ in the sense of \cite{fujita2023isomorphisms}.
\end{remark}

The quantum minors are expected to satisfy quantum $T$-system relations.

\begin{conjecture}
\label{con_Tsystem}
Let $\bfi$ be an expression of $b\in \Br^+$. For every $i$-box $[a,c]$, one has
\begin{equation}\label{eq_Tsystem}
D_{\bfi}[a^+,c]D_{\bfi}[a,c^-]
=
q^A
D_{\bfi}[a,c]D_{\bfi}[a^+,c^-]
+
q^B
\prod_{d(i_a,j)=1}
D_{\bfi}[a(j)^+,c(j)^-],
\end{equation}
where $A,B\in \mathbb Z$.
\end{conjecture}

\begin{remark}\label{rem_Tsystem}
In simply-laced Dynkin type, Conjecture~\ref{con_Tsystem} is known in simply-laced Dynkin type by
\cite[Theorem~5.16]{kashiwara2025monoidal} and \cite{qin2024analog}. 
\end{remark}

\subsection{Cluster structures in Bosonic extensions}
We now present a conjecture concerning the cluster algebra structure of the algebra \( \widehat{\mathcal A}(b) \), generated by elements associated with the braid group element \( b \).

\begin{conjecture}\label{con_boscluster}
Let \( C \) be a Cartan matrix satisfying \( c_{ij} c_{ji} \leq 1 \) for all \( i \neq j \), and let \( b \in \Br^+ \) with an expression \( \mathbf{i} := (i_1, \ldots, i_{\ell(b)}) \). Then, \( \widehat{\mathcal A}(b) \) is a quantum cluster algebra with the initial seed:
\[
\mathbf{t}_\mathbf{i} := \{ (D_{\mathbf{i},s})_{s \in [1, \ell(b)]}, \Lambda_\bfi, B_{\bfi}, K^{\mathrm{ex}} \}.
\]
 Furthermore, the cluster monomials are contained in the global basis \( \mathbf{B}(b) \).
\end{conjecture}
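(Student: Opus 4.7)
\textbf{Proof proposal for Conjecture \ref{con_boscluster}.}

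The strategy splits into two parts: (1) prove that the cluster structure is independent of the chosen reduced expression of $b$, so that it suffices to check the conjecture for a single convenient reduced word; and (2) establish existence of such an initial seed for at least one reduced expression. Part (1) is the main technical content and closely mirrors the corresponding result in the quantum unipotent coordinate ring setting.

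For part (1), I would lift the argument of Corollary \ref{cor_twowords} to the bosonic extension. The crucial input is Theorem \ref{theo_braidmoves}, which shows that under $2$-, $3$-, and $4$-moves the transition maps on Lusztig's parameterizations of the global basis of $\widehat{\cA}(b)$ follow exactly the same combinatorial formulas as those for the dual canonical basis of $A_q(\fn(w))$ in Theorem \ref{theo_moves}. Assuming Conjecture \ref{con_Tsystem} (known in Dynkin types), one can then run the bosonic analogues of Lemma \ref{lem_mutationlusz} and Propositions \ref{pro_mus1}, \ref{pro_4move}: for each $3$-move at positions $(s-1,s,s+1)$, I would expand the exchange relation for $\mu_{s+1}\widehat{D}_\bfi[s+1,\ell\}$ using the $T$-system, identify its $\bfi$-Lusztig parameterization with $\Psi_{\bfi'}^{\bfi}(\bfi'[s+1,\ell\})$, and invoke uniqueness of the global basis element with prescribed parameterization to conclude $\mu_{s+1}\widehat{D}_\bfi[s+1,\ell\} = \widehat{D}_{\bfi'}[s+1,\ell\}$. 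The $4$-move case follows analogously, with a two-step mutation $\mu_{s+3}\mu_{s+2}$ or $\mu_{s+2}\mu_{s+3}$ depending on the orientation of $c_{ij},c_{ji}$. Concatenating these steps along a sequence of moves relating two reduced expressions yields the independence.

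For part (2), in the simply-laced case I would apply monoidal categorification: the quantum Grothendieck group of the Hernandez-Leclerc subcategory $\mathscr{C}_{\fg}^{[a,b]}$ is identified with $\widehat{\cA}(b)$ for appropriate braid elements, and the cluster structure descends from the monoidal cluster algebra structure on the category, which is precisely the route of Theorem \ref{theo_conmain}. For non-simply-laced types with $c_{ij}c_{ji} \leq 2$, a natural approach is to realize $\widehat{\cA}(b)$ as a cluster subalgebra of $A_q(\fn(w))$ for a sufficiently long Weyl element $w$, by invoking Theorem \ref{theo_clusterquantum} together with the restricted seed construction of \eqref{eq_BKJ}. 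The main obstacle is establishing Conjecture \ref{con_Tsystem} outside the Dynkin setting, since identifying mutations with generalized quantum minors in part (1) depends entirely on this exchange relation; a secondary difficulty is that monoidal categorification is not currently available in the non-simply-laced case, so the base existence step for part (2) must proceed through a different route, for instance a folding argument from a simply-laced cover or a direct combinatorial construction of the initial seed.
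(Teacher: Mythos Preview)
The statement you are addressing is a \emph{conjecture} in the paper and is not proved in full generality there. The paper establishes two partial results: (i) independence of the reduced expression (Theorem \ref{theo_words}), and (ii) the simply-laced Dynkin case (Theorem \ref{theo_Dynkincase}). The general non-simply-laced case with $c_{ij}c_{ji}\le 2$ remains open, and you correctly identify the obstructions (Conjecture \ref{con_Tsystem} and the lack of monoidal categorification). So your proposal is not so much a proof of the conjecture as a reconstruction of the paper's partial progress, which is the right expectation.

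Your part (1) matches the paper's Theorem \ref{theo_words} essentially verbatim: lift Corollary \ref{cor_twowords} to the bosonic setting via Theorem \ref{theo_braidmoves}, running the analogues of Lemma \ref{lem_mutationlusz} and Propositions \ref{pro_mus1}, \ref{pro_4move}. One point to make explicit is that this argument uses, as a hypothesis, that cluster monomials for the initial expression already lie in the global basis; this is needed to identify each mutated variable as a global basis element with the predicted Lusztig parameterization.

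Your part (2) in the simply-laced case is close but slightly off. The paper does not identify $\widehat{\cA}(b)$ with a Hernandez--Leclerc subcategory for arbitrary $b$ directly. It first establishes the conjecture for the special elements $\Delta^m$ via Theorem \ref{theo_braidiso}, and then for general $b$ writes $ub=\Delta^m$, chooses a reduced word $\bfi=(\bfk,\bfj)$ with $\bfj$ a word for $b$, and applies the restricted-seed construction to obtain $\bt_b\subset\bt_{\Delta^m}$. The containment $\cA(\bt_b)\subset\widehat{\cA}(b)$ is then proved by an induction on mutations, tracking that the support of each Lusztig parameterization stays inside $\bfj$; the reverse inclusion uses the $T$-system to reach the PBW generators. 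Your suggestion to embed into $A_q(\fn(w))$ for a Weyl element $w$ is the wrong ambient object in the bosonic setting: the restriction happens inside $\widehat{\cA}(\Delta^m)$, not inside a quantum unipotent coordinate ring.
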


\subsubsection{Cluster algebras for two expression of a braid group element.}
For any  expression \( \mathbf{i} \) of \( b \) with length \( \ell \), the Lusztig parameter of \( D_{\mathbf{i},s} \) is \( \mathbf{i}[s, \ell\} \). In the matrix \( B_{\bfi} \), positive entries \( b_{i_t, i_s} > 0 \) are classified as:

1. \emph{Horizontal entry}: \( s^+ \),
2. \emph{Vertical entries}: \( t \) with \( t^- < s^- < t < s \) and \( c_{i_s i_t} \neq 0 \).

Define:

\[
D_1 := D_{\mathbf{i},s^+} \prod_{t^- < s^- < t < s} D_{\mathbf{i},t} ^{-c_{i_t i_s}}.
\]

Similarly, negative entries \( b_{i_t, i_s} < 0 \) are classified as:

1. \emph{Horizontal entry}: \( s^- \),
2. \emph{Vertical entries}: \( t \) with \( s^- < t^- < s < t \) and \( c_{i_s i_t} \neq 0 \).

Define:
\[
D_2 := D_{\mathbf{i},s^-} \prod_{s^- < t^- < s < t} D_{\mathbf{i},t}^{-c_{i_s i_t}}.
\]

If Conjecture \ref{con_boscluster} holds for the word $\bfi$, the cluster structure for the seed $\bt_{\bfi}$ on \( \widehat{\cA}(b) \) implies:
\begin{equation}\label{eq_prodDs}
D_{\mathbf{i},s} \mu_s(D_{\mathbf{i},s}) = q^A D_1 + q^B D_2 \text{ for some number $A,B$}.
\end{equation}

\begin{lemma}\label{lem_mutationlusz}
Suppose Conjecture~\ref{con_boscluster} holds for the expression
\(\mathbf i\). Let \(s\in K^{\mathrm{ex}}\), and write the exchange
relation at \(s\) as
\[
D_{\mathbf i,s}\,\mu_s(D_{\mathbf i,s})
=
q^A D_1+q^B D_2,
\]
where \(D_1\) and \(D_2\) are the two exchange monomials. Then
\[
\mathcal L_{\mathbf i}(\mu_s(D_{\mathbf i,s}))
=
\mathcal L_{\mathbf i}(D_2)-\mathbf i[s,\ell\}.
\]
\end{lemma}

\begin{proof}
Since Conjecture~\ref{con_boscluster} holds for \(\mathbf i\), the mutated
variable \(\mu_s(D_{\mathbf i,s})\) is a global basis element. Moreover,
the exchange monomials \(D_1\) and \(D_2\) are cluster monomials, hence
also belong to the global basis. Note that we have
\[
\mathcal L_{\mathbf i}(D_{\mathbf i,s})
=
\mathbf i[s,\ell\}.
\]
Applying Lemma~\ref{lem_prodglobal} to the exchange relation, we obtain
\[
\mathcal L_{\mathbf i}(\mu_s(D_{\mathbf i,s}))
+
\mathbf i[s,\ell\}
=
\max\{
\mathcal L_{\mathbf i}(D_1),
\mathcal L_{\mathbf i}(D_2)
\}.
\]

By the description of the two exchange monomials, the minimal index of a
nonzero coordinate in \(\mathcal L_{\mathbf i}(D_1)\) is strictly greater
than \(s^-\), whereas the minimal index of a nonzero coordinate in
\(\mathcal L_{\mathbf i}(D_2)\) is exactly \(s^-\). Therefore, with respect
to the order on \(\mathbb Z_{\geq 0}^{[1,\ell]}\) in
Definiton~\ref{def_order}, we have
\[
\mathcal L_{\mathbf i}(D_1)\prec
\mathcal L_{\mathbf i}(D_2).
\]
Hence
\[
\max\{
\mathcal L_{\mathbf i}(D_1),
\mathcal L_{\mathbf i}(D_2)
\}
=
\mathcal L_{\mathbf i}(D_2).
\]
Thus
\[
\mathcal L_{\mathbf i}(\mu_s(D_{\mathbf i,s}))
+
\mathbf i[s,\ell\}
=
\mathcal L_{\mathbf i}(D_2),
\]
and therefore
\[
\mathcal L_{\mathbf i}(\mu_s(D_{\mathbf i,s}))
=
\mathcal L_{\mathbf i}(D_2)-\mathbf i[s,\ell\}.
\]
\end{proof}

\begin{proposition}
\label{pro_mus1}
Let \(\bfi=(i_1,\dots,i_\ell)\) be a word such that
\[
(i_{s-1},i_s,i_{s+1})=(i,j,i),
\qquad
c_{ij}c_{ji}=1.
\]
Assume that Conjecture~\ref{con_boscluster} holds for \(\bfi\). Let
\(\bfi'\) be obtained from \(\bfi\) by replacing the subword
\[
(i,j,i)
\]
in positions \(s-1,s,s+1\) with
\[
(j,i,j).
\]
Then
\[
D_{\bfi',s+1}=\mu_{s+1}(D_{\bfi,s+1}),
\qquad
D_{\bfi',s-1}=D_{\bfi,s},
\qquad
D_{\bfi',s}=D_{\bfi,s-1}.
\]
\end{proposition}

\begin{proof}
We first consider \(D_{\bfi',s+1}\). Since
\[
(s+1)^-=s-1,
\]
the vertex \(s+1\) is mutable. The exchange relation at \(s+1\) has two
exchange monomials
\[
D_1=D_{\bfi,(s+1)^+}D_{\bfi,s},
\qquad
D_2=D_{\bfi,s-1}D_{\bfi,s^+}.
\]

By the assumption that Conjecture~\ref{con_boscluster} holds for \(\bfi\),
the mutated variable
\[
\mu_{s+1}(D_{\bfi,s+1})
\]
is a global basis element. Hence Lemma~\ref{lem_mutationlusz} applies.
Using the above description of \(D_2\), we obtain
\[
\mathcal L_{\bfi}\bigl(\mu_{s+1}(D_{\bfi,s+1})\bigr)
=
(1_{s-1},0,0,\bfi[s^+,\ell\}).
\]
Here the displayed vector is written with respect to the local block
\((s-1,s,s+1)\), followed by the unchanged tail.

By Theorem~\ref{theo_braidmoves}, the braid-move transition map sends the
\(\bfi'\)-Lusztig datum
\[
\bfi'[s+1,\ell\}
\]
to the \(\bfi\)-Lusztig datum
\[
(1_{s-1},0,0,\bfi[s^+,\ell\}).
\]
That is,
\[
\Phi_{\bfi'}^{\bfi}\bigl(\bfi'[s+1,\ell\}\bigr)
=
(1_{s-1},0,0,\bfi[s^+,\ell\}).
\]
Therefore the \(\bfi'\)-Lusztig datum of
\(\mu_{s+1}(D_{\bfi,s+1})\) is precisely
\[
\bfi'[s+1,\ell\}.
\]
Since global basis elements are uniquely determined by their Lusztig
parameters, we get
\[
\mu_{s+1}(D_{\bfi,s+1})=D_{\bfi',s+1}.
\]

Next, we prove
\[
D_{\bfi',s-1}=D_{\bfi,s}.
\]
By definition,
\[
D_{\bfi,s}=B(\bfi,\bfi[s,\ell\}),
\qquad
D_{\bfi',s-1}=B(\bfi',\bfi'[s-1,\ell\}).
\]
In the local block, the \(\bfi\)-Lusztig datum of \(D_{\bfi,s}\) is
\[
(0,1,0),
\]
whereas the \(\bfi'\)-Lusztig datum of \(D_{\bfi',s-1}\) is
\[
(1,0,1).
\]
Outside the local block \((s-1,s,s+1)\), the two data agree. By
Theorem~\ref{theo_braidmoves}, the braid-move transition map sends
\[
(0,1,0)
\longmapsto
(1,0,1).
\]
Equivalently,
\[
\Phi_{\bfi}^{\bfi'}(\bfi[s,\ell\})
=
\bfi'[s-1,\ell\}.
\]
Hence
\[
D_{\bfi',s-1}=D_{\bfi,s}.
\]

Finally, we prove
\[
D_{\bfi',s}=D_{\bfi,s-1}.
\]
Indeed,
\[
D_{\bfi,s-1}=B(\bfi,\bfi[s-1,\ell\}),
\qquad
D_{\bfi',s}=B(\bfi',\bfi'[s,\ell\}).
\]
In the local block, the corresponding Lusztig data are
\[
(1,0,1)
\qquad\text{and}\qquad
(0,1,0),
\]
respectively, and outside the local block they agree. Again by
Theorem~\ref{theo_braidmoves},
\[
\Phi_{\bfi}^{\bfi'}(\bfi[s-1,\ell\})
=
\bfi'[s,\ell\}.
\]
Therefore
\[
D_{\bfi',s}=D_{\bfi,s-1}.
\]
\end{proof}

The following theorem demonstrates that if Conjecture \ref{con_boscluster} holds for one expression of \( b \), it extends to all expressions.

\begin{theorem}
\label{theo_words}
Let \(C\) be a Cartan matrix with
\[
c_{ij}c_{ji}\leq 1
\qquad (i\neq j).
\]
If Conjecture~\ref{con_boscluster} holds for an expression
\(\bfi\) of \(b\in \Br^+\), then it holds for every expression
\(\bfi'\) of \(b\).
\end{theorem}

\begin{proof}
Since \(c_{ij}c_{ji}\leq 1\), any two expressions of \(b\in\Br^+\) are
connected by a finite sequence of \(2\)-moves and \(3\)-moves. Hence it
suffices to prove that Conjecture~\ref{con_boscluster} is preserved under
one such move.

Assume first that \(\bfi'\) is obtained from \(\bfi\) by a \(2\)-move
interchanging the adjacent letters \(i_k\) and \(i_{k+1}\), where
\[
c_{i_k i_{k+1}}=0.
\]
Let \(\sigma_k\) denote the transposition of the positions \(k\) and
\(k+1\). By Theorem~\ref{theo_braidmoves}, we have
\[
D_{\bfi',s}=D_{\bfi,\sigma_k(s)}
\qquad
(s\in[1,\ell]).
\]
Moreover, by \cite[Lemma~2.1]{fujita2023isomorphisms},
\[
B_{\bfi'}
=
(b_{\sigma_k(r),\sigma_k(t)})_{r,t},
\qquad
\Lambda_{\bfi'}
=
(\lambda_{\sigma_k(r),\sigma_k(t)})_{r,t}.
\]
The frozen set is also carried to the frozen set by the same permutation.
Therefore, the seed \(\mathbf t_{\bfi'}\) is obtained from
\(\mathbf t_{\bfi}\) by the seed isomorphism induced by \(\sigma_k\).
Hence
\[
\mathcal A(\mathbf t_{\bfi'})
=
\mathcal A(\mathbf t_{\bfi}).
\]
Since Conjecture~\ref{con_boscluster} holds for \(\bfi\), we have
\[
\mathcal A(\mathbf t_{\bfi})=\widehat{\mathcal A}(b),
\]
and all cluster monomials of \(\mathcal A(\mathbf t_{\bfi})\) belong to
the global basis of \(\widehat{\mathcal A}(b)\). The same conclusions
therefore hold for \(\mathbf t_{\bfi'}\).

Next assume that \(\bfi'\) is obtained from \(\bfi\) by a \(3\)-move
replacing
\[
(i,j,i)
\]
with
\[
(j,i,j)
\]
at the positions \(k-1,k,k+1\), where
\[
c_{ij}c_{ji}=1.
\]
By Proposition~\ref{pro_mus1}, the cluster variables in the local block
satisfy
\[
D_{\bfi',k+1}
=
\mu_{k+1}(D_{\bfi,k+1}),
\qquad
D_{\bfi',k-1}
=
D_{\bfi,k},
\qquad
D_{\bfi',k}
=
D_{\bfi,k-1}.
\]
For \(s\notin\{k-1,k,k+1\}\), the braid move does not affect the
corresponding Lusztig datum, and hence
\[
D_{\bfi',s}=D_{\bfi,s}.
\]
Thus the cluster variables of \(\mathbf t_{\bfi'}\) are obtained from the
cluster variables of \(\mathbf t_{\bfi}\) by first mutating at \(k+1\) and
then applying the permutation \(\sigma_{k-1}\), which interchanges the
positions \(k-1\) and \(k\).

Furthermore, by \cite[Lemmas~2.7 and~2.8]{fujita2023isomorphisms}, the
exchange matrix and the compatible form transform in the same way:
\[
B_{\bfi'}
=
\sigma_{k-1}\mu_{k+1}(B_{\bfi}),
\qquad
\Lambda_{\bfi'}
=
\sigma_{k-1}\mu_{k+1}(\Lambda_{\bfi}).
\]
The frozen set is also identified under this mutation followed by the
permutation \(\sigma_{k-1}\). Therefore
\[
\mathbf t_{\bfi'}
=
\sigma_{k-1}\mu_{k+1}(\mathbf t_{\bfi})
\]
as quantum seeds. In particular, the two seeds are mutation equivalent,
and hence
\[
\mathcal A(\mathbf t_{\bfi'})
=
\mathcal A(\mathbf t_{\bfi})
=
\widehat{\mathcal A}(b).
\]

Since the cluster monomials of \(\mathcal A(\mathbf t_{\bfi})\) belong to
the global basis by assumption, and since \(\mathbf t_{\bfi'}\) is obtained
from \(\mathbf t_{\bfi}\) by a mutation followed by a seed isomorphism, the
cluster monomials for \(\mathbf t_{\bfi'}\) are the same cluster monomials
viewed in the same cluster algebra. Hence they also belong to the global
basis of \(\widehat{\mathcal A}(b)\).

Thus Conjecture~\ref{con_boscluster} is preserved under both \(2\)-moves
and \(3\)-moves. Since any two expressions of \(b\) are connected by such
moves, the theorem follows.
\end{proof}

\subsection{A sequence of mutations}
In this section, we introduce a sequence of mutations and study its effect
on the quiver $Q_{\bfi}$. The resulting description of the mutated quivers
will lead to the quantum $T$-system for the quantum minors in
$\widehat{\cA}(b)$.

For $k\in[1,\ell]$, recall that $n_{i_k}$ denotes the total number of
vertices of color $i_k$ in the word $\bfi$. For $j\in I$, define
\[
k[j]:=\#\{\,s\in[k,\ell]\mid i_s=j\,\}.
\]
Thus $k[j]$ counts the number of vertices of color $j$ lying weakly to the right of $k$ in the word $\bfi$.

\begin{definition}
For $k\in K^{\mathrm{ex}}$, Let
\[r_k=n_{i_k}-1-k[i_k]\]

define
\[
\widetilde{\mu}_k
:=
\mu_{k_{\max}^{r_{i_k}}}\cdots \mu_{k_{\max}^{1}}\mu_{k_{\max}},
\]
where the mutations are taken successively along the chain of predecessors
of $k$. For $l\in [1,\ell]$, set
\[
M_l
:=
\widetilde{\mu}_l\widetilde{\mu}_{l+1}\cdots\widetilde{\mu}_\ell .
\] Here we set $\widetilde\mu_l=\Id$ if $l\in K^{\operatorname{fr}}$. For $k\in[1,\ell]$, define
\[
J_k:=\{(i,p)\mid i\in I,\ 1\le p\le k[i]\}.
\]
Here we use the occurrence notation for vertices introduced above.
We define $\widetilde Q_k$ to be the full subquiver of $M_k(Q_{\bfi})$
obtained by deleting the vertices in $J_k$; equivalently,
$\widetilde Q_k$ is the induced subquiver of $M_k(Q_{\bfi})$ on the
vertex set
\[
\operatorname{Vert}(M_k(Q_{\bfi}))\setminus J_k.
\]
We declare a vertex of $\widetilde Q_k$ to be frozen if it belongs in $\{(i,k[i]+1)\mid i\in I\}$. We denote by $\widehat Q_k$ the quiver obtained from $\widetilde Q_k$ by deleting all arrows between frozen vertices.
\end{definition}

In occurrence notation, the mutation sequence associated with \(k\) is
taken along the vertices of color \(i_k\), from right to left:
\[
(i_k,1+k[i_k])
\longleftarrow
\cdots
\longleftarrow
(i_k,n_{i_k}-1)
\longleftarrow
(i_k,n_{i_k})
\]
\begin{example}\label{exam_bfi}
    Let us consider type $A_3$ and a word $\bfi=(122313213)$. The quiver $Q_\bfi$ is given by Figure \ref{fig:compact-quiver}.
   \begin{figure}
\begin{tikzpicture}[
    x=1.0cm,
    y=1.0cm,
    >={Stealth[length=1.8mm]},
    every node/.style={
        circle,
        draw,
        minimum size=6mm,
        inner sep=0pt,
        font=\small
    },
    every edge/.style={draw, ->, thick}
]

\node (9) at (-0.4,2.4) {1};
\node (8) at (2.8,2.4) {5};
\node (7) at (4.8,2.4) {8};

\node (6) at (0.4,1.2) {2};
\node (5) at (1.6,1.2) {3};
\node (4) at (4.0,1.2) {7};

\node (3) at (2.0,0) {4};
\node (2) at (3.2,0) {6};
\node (1) at (5.2,0) {9};

\draw (8) edge (9);
\draw (7) edge (8);
\draw (6) edge (8);
\draw (8) edge (4);
\draw (4) edge (7);

\draw (5) edge (6);
\draw (4) edge (5);

\draw (2) edge (3);
\draw (1) edge (2);

\draw (3) edge (4);
\draw (4) edge (1);

\end{tikzpicture}
\caption{Quiver $Q_\bfi$}
\label{fig:compact-quiver}
\end{figure}
We have
\[
\widetilde{\mu}_9=\mu_6\mu_9,\quad
\widetilde{\mu}_8=\mu_5\mu_8,\quad
\widetilde{\mu}_7=\mu_3\mu_7,\quad
\widetilde{\mu}_6=\mu_9 ,\quad \cdots
\]
To determine \(J_6\), we first compute
\[
6[1]=1,\qquad 6[2]=1,\qquad 6[3]=2.
\]
It follows, from the definition of \(J_6\), that
\[
J_6=\{1,2,4,6\}.
\]
\end{example}
\noindent

\subsubsection{The two-color subquiver associated with adjacent vertices}
Let \(p,q\in[1,\ell]\) be two vertices such that
\[
c_{i_p i_q}c_{i_q i_p}=1.
\]
We denote by \(Q_{p,q}\) the full subquiver of \(Q_{\bfi}\) whose vertices
have color either \(i_p\) or \(i_q\).

We now describe the ordinary arrows in \(Q_{p,q}\). Let
\begin{equation}\label{eq_first_arrow}
j^1\longrightarrow k^1
\end{equation}
be the rightmost ordinary arrow in \(Q_{p,q}\).
Here “rightmost” means that \(j^1\) and \(k^1\) are chosen maximal among
vertices of colors \(i_p\) and \(i_q\), subject to the
condition that there is an ordinary arrow between them. Suppose
\[
i_{j^1}=i_p,
\qquad
i_{k^1}=i_q.
\]

We define two sequences of vertices \(\{j^n\}\) and \(\{k^n\}\)
inductively. Suppose that \(j^{n-1}\) has been defined. Using the notation
introduced in \eqref{eq:notationa+a-}, set
\[
k^n:=(j^{n-1})^-(i_q)^+,
\qquad
j^n:=(k^n)^-(i_p)^+, \text{ for all }n\geq 2
\]
whenever these vertices exist. For instance, if $s^-=-\infty$, then we say $s^-(i)^+$ doesn't exist for any $i\in I$. In other words, \(k^n\) is obtained by
first taking the predecessor of \(j^{n-1}\), and then taking the next
vertex of color \(i_q\); similarly, \(j^n\) is obtained by first taking
the predecessor of \(k^n\), and then taking the next vertex of color
\(i_p\). Thus each \(j^n\) has color \(i_p\), and each \(k^n\) has color
\(i_q\).
\begin{lemma}\label{lem_arrows}
Assume that the rightmost ordinary arrow in $Q_{p,q}$ is \eqref{eq_first_arrow}.
Then the ordinary arrows in the two-color subquiver $Q_{p,q}$ are precisely
\[
j^n\longrightarrow k^n\qquad(n\geq 1)
\]
and
\[
k^n\longrightarrow j^{n-1}\qquad(n\geq 2),
\]
for all \(n\) for which the corresponding vertices are defined.
\end{lemma}

\begin{proof}
Recall that an ordinary arrow $u\to v$ in $Q_{\bfi}$ is characterized by
\[
u^-<v^-<u<v
\qquad\text{and}\qquad
c_{i_u i_v}=-1.
\]

We first check that the arrows listed in the statement do occur. By the
definition of $j^1$, we have
\[
(j^1)^-<(k^1)^-<j^1<k^1,
\]
hence $j^1\to k^1$ is an ordinary arrow.

Assume that $j^{n-1}\to k^{n-1}$ is an ordinary arrow. Then
\[
(j^{n-1})^-<(k^{n-1})^-<j^{n-1}<k^{n-1}.
\]
Since $k^n=(j^{n-1})^-(i_q)^+$, we have
\[
(k^n)^-<(j^{n-1})^-<k^n<j^{n-1},
\]
and hence $k^n\to j^{n-1}$ is an ordinary arrow. Similarly, since
$j^n=(k^n)^-(i_p)^+$, we get
\[
(j^n)^-<(k^n)^-<j^n<k^n,
\]
so $j^n\to k^n$ is also an ordinary arrow.

It remains to prove that there are no other ordinary arrows. Let
$a\to b$ be an ordinary arrow in $Q_{p,q}$.

First suppose that $a$ has color $i_p$ and $b$ has color $i_q$. Choose
$t$ such that
\[
j^{t-1}>a\ge j^t .
\]
If no such \(t\) exists, then \(a\) lies to the left of the last defined
\(j^t\). In this case the relevant predecessor of color \(i_q\) does not
exist, and the inequalities defining an ordinary arrow cannot be satisfied.
Thus no ordinary arrow can occur there.

If $a\neq j^t$, then $j^{t-1}>a>j^t$, and hence
\[
a^-\ge j^t,
\qquad
(j^{t-1})^-\ge a.
\]
We compare $b$ with $k^t$.

If $b=k^t$, then the ordinary-arrow inequalities give
\[
a^-<(k^t)^-<a<k^t.
\]
Since $j^t=(k^t)^-(i_p)^+$, this forces $a=j^t$, contradicting
$a>j^t$. If $b<k^t$, then $b^-\le (k^t)^-$, and therefore
\[
b^-<j^t\le a^-,
\]
contradicting $a^-<b^-$. If $b>k^t$, then $b^-\ge k^t$, and since
$k^t=(j^{t-1})^-(i_q)^+$, we get
\[
b^-\ge k^t>(j^{t-1})^-\ge a,
\]
contradicting $b^-<a$. Thus necessarily $a=j^t$.

Now, with $a=j^t$, the same comparison shows that $b$ must be $k^t$.
Indeed, if $b<k^t$, then
\[
b\le (k^t)^-<(k^t)^-(i_p)^+=j^t=a,
\]
contradicting $a<b$; while if $b>k^t$, then
\[
k^t\le b^-<a=j^t,
\]
contradicting the ordinary arrow $j^t\to k^t$. Hence the only arrows from
color $i_p$ to color $i_q$ are
\[
j^t\to k^t.
\]

Now suppose that $a$ has color $i_q$ and $b$ has color $i_p$. Choose
$t$ such that
\[
k^{t-1}>a\ge k^t .
\]
If no such \(t\) exists, then \(a\) lies to the left of the last defined
\(k^t\). In this case the relevant predecessor of color \(i_p\) does not
exist, and the inequalities defining an ordinary arrow cannot be satisfied.
Thus no ordinary arrow can occur there.
If $a\neq k^t$, then $k^{t-1}>a>k^t$, and hence
\[
a^-\ge k^t,
\qquad
(k^{t-1})^-\ge a.
\]
We compare $b$ with $j^{t-1}$.

If $b=j^{t-1}$, then, since
$k^t=(j^{t-1})^-(i_q)^+$, we have
\[
b^-=(j^{t-1})^-<k^t\le a^-,
\]
contradicting $a^-<b^-$. If $b>j^{t-1}$, then
\[
b^-\ge j^{t-1}=(k^{t-1})^-(i_p)^+>(k^{t-1})^-\ge a,
\]
contradicting $b^-<a$. If $b<j^{t-1}$, then
\[
b^-<(j^{t-1})^-<k^t\le a^-,
\]
again contradicting $a^-<b^-$. Therefore $a=k^t$.

Finally, with $a=k^t$, the vertex $b$ must be $j^{t-1}$. If
$b<j^{t-1}$, then
\[
b\le (j^{t-1})^-<(j^{t-1})^-(i_q)^+=k^t=a,
\]
contradicting $a<b$. If $b>j^{t-1}$, then
\[
j^{t-1}\le b^-<a=k^t,
\]
contradicting the ordinary arrow $k^t\to j^{t-1}$. Hence the only arrows
from color $i_q$ to color $i_p$ are
\[
k^t\to j^{t-1}.
\]

Combining the two orientations, the ordinary arrows in $Q_{p,q}$ are
exactly
\[
j^t\to k^t
\qquad\text{and}\qquad
k^t\to j^{t-1}.
\]
\end{proof}

The following figure illustrates the pattern of ordinary arrows in the
two-color subquiver $Q_{p,q}$, where the vertices $j^m$ have color
$i_p$ and the vertices $k^m$ have color $i_q$.

\begin{figure}[htbp]
\centering
\begin{tikzpicture}[
  x=1.2cm,
  y=0.9cm,
  >={Stealth[length=1.7mm]},
  every node/.style={font=\small},
  arr/.style={->, thick, shorten >=1pt, shorten <=1pt}
]

\node (jtp1) at (0,0) {$j^{t+1}$};
\node (ktp1) at (1,1) {$k^{t+1}$};

\node (jt)   at (2.8,0) {$j^t$};
\node (kt)   at (3.8,1) {$k^t$};

\node (dotsbot) at (5.2,0) {$\cdots$};
\node (dotstop) at (6.0,1) {$\cdots$};

\node (j2) at (7.8,0) {$j^2$};
\node (k2) at (8.8,1) {$k^2$};

\node (j1) at (10.6,0) {$j^1$};
\node (k1) at (11.6,1) {$k^1$};

\draw[arr] (jtp1) -- (ktp1);
\draw[arr] (ktp1) -- (jt);

\draw[arr] (jt) -- (kt);
\draw[arr] (kt) -- (dotsbot);

\draw[arr] (dotstop) -- (j2);
\draw[arr] (j2) -- (k2);

\draw[arr] (k2) -- (j1);
\draw[arr] (j1) -- (k1);

\end{tikzpicture}
\caption{Ordinary arrows in the two-color subquiver $Q_{p,q}$.}
\label{fig:ordinary-arrows-pq}
\end{figure}

\begin{lemma}\label{lem_first_ordinary}
Let \(Q_{\bfi}\) be the quiver associated with the exchange matrix
\(B_{\bfi}\). Let \(p\in[1,\ell]\) be such that
\[
c_{i_p i_\ell}c_{i_\ell i_p}=1.
\]
Assume that the two-color subquiver \(Q_{p,\ell}\) contains an ordinary
arrow. Then its rightmost ordinary arrow is oriented from the vertex of
color \(i_p\) to the vertex of color \(i_\ell\).
\end{lemma}
\begin{proof}
Assume, for a contradiction, that the rightmost ordinary arrow in
\(Q_{p,\ell}\) is of the form
\[
\ell'\longrightarrow p',
\]
where
\[
i_{\ell'}=i_\ell,
\qquad
i_{p'}=i_p.
\]
By the defining rule for ordinary arrows, we have
\[
(\ell')^-<(p')^-<\ell'<p'.
\]
Since \(p'<\ell\) and \(i_\ell=i_{\ell'}\), there exists a first vertex of
color \(i_\ell\) lying to the right of \(p'\). Denote it by
\[
\ell'':=p'(i_\ell)^+.
\]
Then \(\ell''\leq \ell\). Moreover, since \((\ell'')^-\) is the previous
vertex of color \(i_\ell\) before \(\ell''\), and since \(\ell'\) is a
vertex of color \(i_\ell\) lying before \(p'\), we have
\[
\ell'\leq (\ell'')^-<p'<\ell''.
\]
Together with
\[
(p')^-<\ell',
\]
this gives
\[
(p')^-<(\ell'')^-<p'<\ell''.
\]
Since \(c_{i_p i_\ell}=-1\), the defining rule for ordinary arrows gives
an ordinary arrow
\[
p'\longrightarrow \ell''.
\]
But this ordinary arrow lies strictly to the right of
\(\ell'\to p'\) in the color \(i_\ell\)-coordinate, while it has the same
vertex \(p'\) of color \(i_p\). This contradicts the choice of
\(\ell'\to p'\) as the rightmost ordinary arrow.

Therefore the rightmost ordinary arrow cannot be oriented from color
\(i_\ell\) to color \(i_p\). Hence it is oriented from the vertex of color
\(i_p\) to the vertex of color \(i_\ell\).
\end{proof}

\subsubsection{The effect of the mutation sequence $\widetilde\mu_{\ell}$ on $Q_{\bfi}$}
To analyze the effect of the mutation sequence \(\widetilde{\mu}_{\ell}\)
on the quiver \(Q_{\bfi}\), we restrict to the two-color full subquiver
\(Q_{p,\ell}\), where
\[
c_{i_p i_\ell}c_{i_\ell i_p}=1.
\]
If \(Q_{p,\ell}\) contains no ordinary arrow, then there is nothing to
prove for this pair of colors. Thus we assume that ordinary arrows occur.

By Lemma~\ref{lem_first_ordinary}, the rightmost ordinary arrow in
\(Q_{p,\ell}\) is oriented from color \(i_p\) to color \(i_\ell\). Hence,
using the notation of Lemma~\ref{lem_arrows}, we may label the vertices so
that the vertices \(j^t\) have color \(i_p\), while the vertices \(k^t\)
have color \(i_\ell\). Lemma~\ref{lem_arrows} then shows that the
ordinary arrows in \(Q_{p,\ell}\) form a zig-zag chain
\[
j^t\longrightarrow k^t,
\qquad
k^{t+1}\longrightarrow j^t,
\]
for all \(t\) for which the corresponding vertices are defined.

Our goal is to understand how the mutation sequence
\(\widetilde{\mu}_{\ell}\) changes this two-color subquiver. Since
\(\widetilde{\mu}_{\ell}\) mutates successively at vertices of color
\(i_\ell\), the effect on \(Q_{p,\ell}\) can be analyzed locally around
each segment of the zig-zag chain
\[
k^{t+1}\longrightarrow j^t\longrightarrow k^t.
\]
Indeed, when one mutates at a vertex of color \(i_\ell\), the only arrows
inside the two-color subquiver that can change are the arrows incident to
that vertex, together with the arrows created or cancelled by length-two
paths through that vertex. Thus the relevant local data are the three
consecutive vertices
\[
k^{t+1},\qquad k^t,\qquad (k^t)^+,
\]
together with the intervening vertex \(j^t\) of color \(i_p\).\\

\noindent
The four possible local configurations appearing during this process are
shown in Figure~\ref{fig:four-local-configurations}.
\begin{figure}[htbp]
\centering
\begin{tikzpicture}[
   x=1.1cm,
  y=1.0cm,
  >={Stealth[length=1.8mm]},
  every node/.style={font=\small},
  arr/.style={->, thick, shorten >=1pt, shorten <=1pt},
  darr/.style={->, thick, dashed, shorten >=1pt, shorten <=1pt},
  lab/.style={font=\small\bfseries}
]

\node (q1) at (0,0) {$k^{t+1}$};
\node (c1) at (3,0) {$k^{t}$};
\node (u1) at (4.5,0) {$(k^t)^+$};
\node (p1) at (2,1.7) {$j^t$};

\draw[arr] (q1) -- (p1);
\draw[arr] (p1) -- (c1);
\draw[darr] (c1) -- (q1);
\draw[arr] (c1) -- (u1);

\node[lab] at (3,-0.8) {(1)};

\node (q2) at (6,0) {$k^{t+1}$};
\node (c2) at (8.5,0) {$k'$};
\node (u2) at (10,0) {$(k')^+$};
\node (w2) at (12,0) {$(k^t)^+$};
\node (p2) at (9,1.7) {$j^t$};

\draw[arr] (q2) -- (p2);
\draw[arr] (p2) -- (c2);
\draw[arr] (u2) -- (p2);
\draw[darr] (c2) -- (q2);
\draw[arr] (c2) -- (u2);
\draw[darr] (u2) -- (w2);
\draw[arr] (p2) -- (w2);

\node[lab] at (10,-0.8) {(2)};

\node (q3) at (1,-3.2) {$k^{t+1}$};
\node (c3) at (2.5,-3.2) {$(k^{t+1})^+$};
\node (u3) at (5,-3.2) {$(k^t)^+$};
\node (p3) at (4,-1.5) {$j^t$};
\node (v3) at (0,-1.5) {$j^{t+1}$};

\draw[arr] (p3) -- (u3);
\draw[arr] (q3) -- (c3);
\draw[arr] (c3) -- (p3);
\draw[darr] (u3) -- (c3);
\draw[arr] (v3) -- (q3);

\node[lab] at (3,-4.0) {(3)};

\node (q4) at (7,-3.2) {$(k^{t+2})^+$};
\node (c4) at (9.5,-3.2) {$(k^{t+1})^+$};
\node (u4) at (12,-3.2) {$(k^t)^+$};
\node (p4) at (8.5,-1.5) {$j^{t+1}$};
\node (r4) at (11,-1.5) {$j^t$};

\draw[arr] (p4) -- (q4);
\draw[arr] (p4) -- (c4);
\draw[darr] (c4) -- (q4);
\draw[arr] (c4) -- (r4);
\draw[arr] (r4) -- (u4);
\draw[darr] (u4) -- (c4);

\node[lab] at (10,-4.0) {(4)};

\end{tikzpicture}
\caption{Four possible local configurations arising in the mutation
process. Dashed arrows denote paths consisting of horizontal arrows.}
\label{fig:four-local-configurations}
\end{figure}

\smallskip

\noindent
\textbf{Local mutation rule.}
We shall repeatedly use the usual quiver mutation rule. When mutating at a
vertex \(v\), one creates an arrow \(x\to y\) for each oriented path
\(x\to v\to y\), then reverses all arrows incident with \(v\), and finally
cancels all oriented 2-cycles.

\smallskip

\noindent
\textbf{Step 1: the initial local configuration.}
We first consider the rightmost segment of the zig-zag chain. By the choice
of the rightmost ordinary arrow, no ordinary arrow of \(Q_{p,\ell}\) is
incident with a vertex of color \(i_\ell\) lying strictly to the right of
\(k^1\). Hence the mutations
\[
\mu_{(k^1)^+}\cdots \mu_\ell
\]
affect only the horizontal arrows joining consecutive vertices of color
\(i_\ell\) inside the two-color subquiver. In particular, the ordinary
arrows in the segment
\[
k^{2}\longrightarrow j^1\longrightarrow k^1
\]
remain unchanged. Thus the local configuration around
\[
k^2,\qquad j^1,\qquad k^1,\qquad (k^1)^+
\]
is the one shown in Figure~\ref{fig:four-local-configurations}\,(1). If \(k^2\) does not exist, then this rightmost segment consists only of the
last arrow, and we may pass directly to the case considered in \eqref{eq_last_arrows}.

\smallskip

\noindent
\textbf{Step 2: mutating between \(k^t\) and \((k^{t+1})^+\).}
We next mutate successively at the vertices of color \(i_\ell\) lying
between \(k^t\) and \((k^{t+1})^+\). Let
\[
v=(k')^+
\]
be one of these vertices. Mutating at \(v\) creates arrows along all
oriented paths through \(v\), reverses all arrows incident with \(v\), and
cancels the resulting oriented 2-cycles. After each such mutation, the same local pattern is shifted one step to the
left; a typical intermediate configuration is shown in
Figure~\ref{fig:four-local-configurations}\,(2).
\smallskip

\noindent
\textbf{Step 3: mutating at \((k^{t+1})^+\).}
We now mutate at \((k^{t+1})^+\). Indeed, the oriented path
\[
j^t\longrightarrow (k^{t+1})^+\longrightarrow k^{t+1}
\]
creates an arrow
\[
j^t\longrightarrow k^{t+1},
\]
which is opposite to the existing arrow
\[
k^{t+1}\longrightarrow j^t.
\]
Hence these two arrows form a 2-cycle and cancel. After reversing the
remaining arrows incident with \((k^{t+1})^+\), the local configuration is
exactly the one shown in Figure~\ref{fig:four-local-configurations}\,(3).

\smallskip

\noindent
\textbf{Step 4: propagation to the next segment.}
Repeating the same argument one step further to the left, the role of the
pair \((j^t,k^t)\) is replaced by the next pair
\((j^{t+1},k^{t+1})\). Thus the local configuration becomes the one shown
in Figure~\ref{fig:four-local-configurations}\,(4). In this way, the same
pattern propagates inductively along the chain of vertices of color
\(i_\ell\).

At the boundary, if \((k^1)^-\) does not exist, then no arrow involving
\((k^1)^-\) appears.

Assume that
\begin{equation}\label{eq_last_arrows}
j^{t+1}\longrightarrow k^{t+1}
\end{equation}
is the last ordinary arrow in \(Q_{p,\ell}\). We claim that \(j^{t+1}\) is
a boundary vertex of the induced subquiver, and hence is frozen. Indeed,
if \((j^{t+1})^-\) existed, then \(k^{t+2}\) would be defined. By the
construction of the zig-zag chain, this would yield another ordinary arrow
\[
k^{t+2}\longrightarrow j^{t+1},
\]
contradicting the assumption that \(j^{t+1}\to k^{t+1}\) is the last
ordinary arrow. Thus \(j^{t+1}\) is a boundary vertex, hence frozen.

In this case the final local configuration gives
\begin{equation}\label{eq_frozen_arrows}
(\ell_{\min})^+\longrightarrow j^{t+1}\longrightarrow \ell_{\min}.
\end{equation}

Summarizing, the four configurations in
Figure~\ref{fig:four-local-configurations} describe the successive local
forms of the subquiver \(Q_{p,\ell}\) during the mutation sequence
\(\widetilde{\mu}_{\ell}\). This yields the following lemma.

\begin{lemma}
\label{lem:embedding-Qell}
During the mutation sequence \(\widetilde{\mu}_{\ell}\), let \((i_\ell,k)\) be one of the vertices of color \(i_\ell\) appearing in
the mutation sequence \(\widetilde\mu_\ell\). Immediately before mutating at \((i_\ell,k)\), all arrows
with source \((i_\ell,k)\) are horizontal arrows, while all arrows with
target \((i_\ell,k)\) are ordinary arrows.

Moreover, there is an isomorphism of quivers
\[
\Phi_\ell:\widehat Q_\ell\simeq Q_{\bfi}\setminus\{\ell\}.
\]
Under this isomorphism, the frozen vertices of \(\widehat Q_\ell\) are sent
exactly to the frozen vertices of \(Q_{\bfi}\setminus\{\ell\}\).
\end{lemma}

\begin{proof}
The first assertion follows from the local analysis above. Indeed, for
each color \(i_p\) adjacent to \(i_\ell\), that is, satisfying
\[
c_{i_p i_\ell}c_{i_\ell i_p}=1,
\]
the two-color subquiver \(Q_{p,\ell}\) evolves according to the four local
configurations in Figure~\ref{fig:four-local-configurations}. These configurations show that, during the mutation sequence
\(\widetilde\mu_\ell\), every arrow starting from the mutated vertex of
color \(i_\ell\) is horizontal, and every arrow ending at it is ordinary.
The ordinary arrows come only from the two-color subquivers
\(Q_{p,\ell}\) with adjacent colors. Hence the first assertion follows.

We now construct the isomorphism \(\Phi_\ell\). In occurrence notation,
define
\[
\Phi_\ell(i,k)=(i,k)
\qquad\text{if } i\neq i_\ell,
\]
and
\[
\Phi_\ell(i_\ell,k)=(i_\ell,k-1)
\qquad\text{for } k\geq 2.
\]
The vertex \((i_\ell,1)=\ell_{\min}\) is deleted. Hence \(\Phi_\ell\) is a bijection from the vertex set
of \(\widehat Q_\ell\) to the vertex set of \(Q_{\bfi}\setminus\{\ell\}\).

It remains to check that \(\Phi_\ell\) preserves arrows. We first consider
horizontal arrows. After applying the mutation sequence
\(\widetilde\mu_\ell\), the horizontal arrows of color \(i_\ell\) have the
form
\begin{equation}
\label{eq_horizontal-arrows}
(i_\ell,1)\longrightarrow (i_\ell,2)
\longleftarrow \cdots
\longleftarrow (i_\ell,n_{i_\ell}).
\end{equation}
The only possible extra horizontal arrow at the boundary is
\[
(i_\ell,1)\longrightarrow (i_\ell,2).
\]
Since \(\ell_{\min}\in J_\ell\), this arrow is removed when we pass to the
full subquiver \(\widehat Q_\ell\).

Under the map \(\Phi_\ell\), the remaining horizontal arrows are shifted
one step to the left and become
\[
(i_\ell,1)\longleftarrow (i_\ell,2)
\longleftarrow \cdots
\longleftarrow (i_\ell,n_{i_\ell}-1).
\]
These are exactly the horizontal arrows in
\(Q_{\bfi}\setminus\{\ell\}\). Hence \(\Phi_\ell\) preserves horizontal
arrows.

We next consider ordinary arrows. For every adjacent color \(i_p\), let us consider the subquiver $Q_{p,\ell}$. by Lemma \ref{lem_first_ordinary}, we get $i_{k^1}=i_\ell$ the $i_{j^1}=i_p$. 
The local configurations in Figure~\ref{fig:four-local-configurations} show
that all non-boundary ordinary arrows in the mutated two-color subquiver
are sent by \(\Phi_\ell\) to the corresponding ordinary arrows in
\(Q_{\bfi}\setminus\{\ell\}\). Hence it remains only to examine the left
boundary of each two-color chain.

First suppose that the last ordinary arrow in \(Q_{p,\ell}\) is
\[
j^{t+1}\longrightarrow k^{t+1}.
\]
We claim that \(j^{t+1}\) is frozen. Indeed, if \((j^{t+1})^-\) existed,
then, by setting
\[
k^{t+2}:=(j^{t+1})^-(i_\ell)^+,
\]
the defining rule for ordinary arrows would give another ordinary arrow
\[
k^{t+2}\longrightarrow j^{t+1},
\]
contradicting the assumption that \(j^{t+1}\to k^{t+1}\) is the last
ordinary arrow. Hence \((j^{t+1})^-\) does not exist, and therefore
\(j^{t+1}\) is frozen.

In this boundary case, the local mutation process produces the arrows
\[
\ell_{\min}^+\longrightarrow j^{t+1},
\quad
j^{t+1}\longrightarrow \ell_{\min}, \quad\text{and} \quad j^{t+1}\to (k^{t+1})^+
\]
The second arrow is removed when we pass to the full subquiver
\(\widehat Q_\ell\), because \(\ell_{\min}\in J_\ell\). The first arrow
is an arrow between frozen vertices: \(j^{t+1}\) is frozen by the previous
paragraph, and \(\ell_{\min}^+\) is frozen in \(\widehat Q_\ell\) by the
definition of the frozen vertices after deleting \(J_\ell\). Hence this
arrow is deleted when passing from \(\widetilde Q_\ell\) to
\(\widehat Q_\ell\). Therefore only arrow $j^{t+1}\to (k^{t+1})^+$ remains in
\(\widehat Q_\ell\) in this case.

Now suppose that the last ordinary arrow is
\[
k^{t+1}\longrightarrow j^t.
\]
Then the ordinary-arrow inequalities give
\[
(k^{t+1})^-<(j^t)^-<k^{t+1}<j^t.
\]
In particular, \((j^t)^-\) exists, so \(j^t\) is not frozen. Thus the
arrows appearing in the final local configuration are not removed. Under the shift
\[
(i_\ell,k)\mapsto(i_\ell,k-1),
\]
these arrows are sent exactly to the corresponding ordinary arrows of
\(Q_{\bfi}\setminus\{\ell\}\).
Indeed, away from the boundary, the local configurations merely shift the
vertices of color \(i_\ell\) by one occurrence, while leaving the vertices
of the other color unchanged. This is precisely the action of
\(\Phi_\ell\).

Combining the horizontal-arrow and ordinary-arrow analyses, we conclude
that \(\Phi_\ell\) preserves arrows and induces an isomorphism
\[
\Phi_\ell:\widehat Q_\ell\simeq Q_{\bfi}\setminus\{\ell\}.
\]

Finally, let us check frozen vertices. For colors \(i\neq i_\ell\),
\(\Phi_\ell\) fixes the occurrence labels, so frozen vertices are preserved.
For color \(i_\ell\), the vertex \((i_\ell,2)=\ell_{\min}^+\) becomes
\((i_\ell,1)\) under \(\Phi_\ell\), which is precisely the frozen boundary
vertex of the color \(i_\ell\)-chain in \(Q_{\bfi}\setminus\{\ell\}\).
Thus \(\Phi_\ell\) sends frozen vertices of \(\widehat Q_\ell\) exactly to
frozen vertices of \(Q_{\bfi}\setminus\{\ell\}\).
\end{proof}
\begin{example}
\label{exam:mutation-subquivers}
We continue Example~\ref{exam_bfi}. First consider the two-color subquiver
$Q_{7,9}$. After applying the mutation sequence $\widetilde\mu_9$, we
obtain the quiver shown in Figure~\ref{fig:quiver79}.

\begin{figure}[htbp]
\centering
\begin{tikzpicture}[
    x=1.0cm,
    y=0.9cm,
    >={Stealth[length=1.7mm]},
    every node/.style={
        circle,
        draw,
        minimum size=5.5mm,
        inner sep=0pt,
        font=\small
    },
    every edge/.style={draw, ->, thick}
]

\node (2) at (0.4,1.2) {$2$};
\node (3) at (1.6,1.2) {$3$};
\node (7) at (4.0,1.2) {$7$};

\node (4) at (2.0,0) {$4$};
\node (6) at (3.2,0) {$6$};
\node (9) at (5.2,0) {$9$};

\draw (3) edge (2);
\draw (7) edge (3);
\draw (4) edge (6);
\draw (9) edge (6);
\draw (6) edge (7);

\end{tikzpicture}
\caption{The quiver obtained from $Q_{7,9}$ after applying $\widetilde\mu_9$.}
\label{fig:quiver79}
\end{figure}

Next take the word $\bfj=(12231321)$ and consider the subquiver
$Q_{7,8}$. After applying the mutation sequence $\widetilde\mu_8$, we get
the quiver on the left-hand side of Figure~\ref{fig:quiver78}. In the
construction of $\widehat Q_8$, the vertex $1$ is deleted, while the
vertices $2$ and $5$ become frozen. Thus, after deleting the vertex $1$
and the arrows incident with it, and also removing the arrows between
frozen vertices, we obtain the quiver on the right-hand side of
Figure~\ref{fig:quiver78}.

\begin{figure}[htbp]
\centering
\begin{tikzpicture}[
    x=0.95cm,
    y=0.9cm,
    >={Stealth[length=1.7mm]},
    every node/.style={
        circle,
        draw,
        minimum size=5.5mm,
        inner sep=0pt,
        font=\small
    },
    every edge/.style={draw, ->, thick},
    frozen/.style={
        circle,
        draw,
        double,
        minimum size=5.5mm,
        inner sep=0pt,
        font=\small
    }
]

\node (1L) at (-0.4,2.4) {$1$};
\node[frozen] (5L) at (2.8,2.4) {$5$};
\node (8L) at (4.8,2.4) {$8$};

\node[frozen] (2L) at (0.4,1.2) {$2$};
\node (3L) at (1.6,1.2) {$3$};
\node (7L) at (4.0,1.2) {$7$};

\draw (1L) edge (5L);
\draw (8L) edge (5L);
\draw (2L) edge (8L);
\draw (5L) edge (2L);
\draw (8L) edge (7L);
\draw (2L) edge (1L);
\draw (3L) edge (2L);
\draw (7L) edge (3L);

\node[draw=none] at (2.3,-0.55) {$\widetilde\mu_8(Q_{7,8})$};

\node[frozen] (5R) at (8.0,2.4) {$5$};
\node (8R) at (10.0,2.4) {$8$};

\node[frozen] (2R) at (5.6,1.2) {$2$};
\node (3R) at (6.8,1.2) {$3$};
\node (7R) at (9.2,1.2) {$7$};

\draw (8R) edge (5R);
\draw (2R) edge (8R);
\draw (8R) edge (7R);
\draw (3R) edge (2R);
\draw (7R) edge (3R);

\node[draw=none] at (7.8,-0.55) {$\widehat Q_8$};

\end{tikzpicture}
\caption{The mutation of $Q_{7,8}$ and the corresponding quiver $\widehat Q_8$.}
\label{fig:quiver78}
\end{figure}
\end{example}

\subsection{T-system}
Throughout this subsection, let $\bfi=(i_1,\dots,i_\ell)$ be an expression
of $b\in\operatorname{Br}^+$. We assume that the quantum cluster algebra
$\cA(\bt_\bfi)$ is contained in $\widehat{\cA}(b)$ and that all cluster
monomials of $\cA(\bt_\bfi)$ belong to the global basis of $\widehat{\cA}(b)$.  We will prove that, for every $i$-box
$[a,c]\subset [1,\ell]$, the quantum minor $D_{\bfi}[a,c]$ is a cluster
variable in $\cA_{\bfi}$ and these quantum minors satisfy $T$-system. 
\begin{lemma}
\label{lem_mutation_Daell}
 Let $\bfi=(i_1,\dots,i_\ell)$ be an expression
of $b\in\operatorname{Br}^+$. We assume that the quantum cluster algebra
$\cA(\bt_\bfi)$ is contained in $\widehat{\cA}(b)$ and that all cluster
monomials of $\cA(\bt_\bfi)$ belong to the global basis of $\widehat{\cA}(b)$.
After applying the mutation sequence $\widetilde{\mu}_{\ell}$, then the quantum minors
\[
D_{\bfi}[a,\ell^-]= \widetilde{\mu}_\ell(D_{\bfi,a^+})
\qquad (i_a=i_\ell,\ a< \ell)
\]
are cluster variables in the seed
\(
\widetilde{\mu}_{\ell}(\mathbf t_{\bfi}).
\)
\end{lemma}

\begin{proof}
Let
\[
a_0=\ell,\qquad a_r=\ell^{-r}\quad(r\geq 1)
\]
be the predecessor chain of the vertex \(\ell\) of color \(i_\ell\).
Thus the vertices mutated by \(\widetilde{\mu}_\ell\) are
\[
a_0,a_1,a_2,\ldots
\]
as long as they are mutable.

We prove by induction on \(r\geq 1\) that
\[
\mu_{a_{r-1}}\cdots\mu_{a_0}(D_{\bfi,a_{r-1}})
=
D_{\bfi}[a_r,\ell^-].
\]
The case \(r=1\) says
\[
\mu_\ell(D_{\bfi,\ell})
=
D_{\bfi}[\ell^-,\ell^-].
\]

Indeed, by Lemma~\ref{lem:embedding-Qell}, at the moment when we mutate at
\(\ell\), the only outgoing horizontal arrow from \(\ell\) is
\[
\ell\longrightarrow \ell^-,
\]
and all arrows ending at \(\ell\) are ordinary arrows. Hence the exchange
relation has the form
\[
D_{\bfi,\ell}\,\mu_\ell(D_{\bfi,\ell})
=
q^A D_{\bfi,\ell^-}+q^B M,
\]
where \(M\) is a cluster monomial involving only the variables attached to
the ordinary arrows ending at \(\ell\).

Since all cluster monomials of \(\cA_{\bfi}\) belong to the global basis,
\(\mu_\ell(D_{\bfi,\ell})\) is a global basis element. By
Lemma~\ref{lem_prodglobal}, we have
\[
\mathcal L_{\bfi}(D_{\bfi,\ell})
+
\mathcal L_{\bfi}(\mu_\ell(D_{\bfi,\ell}))
=
\max\{
\mathcal L_{\bfi}(D_{\bfi,\ell^-}),
\mathcal L_{\bfi}(M)
\}.
\]
The \(\ell\)-th coordinate of
\(\mathcal L_{\bfi}(D_{\bfi,\ell^-})\) is nonzero, whereas the
\(\ell\)-th coordinate of \(\mathcal L_{\bfi}(M)\) is zero. Hence the
maximum is
\[
\mathcal L_{\bfi}(D_{\bfi,\ell^-}).
\]
Therefore
\[
\mathcal L_{\bfi}(\mu_\ell(D_{\bfi,\ell}))
=
\mathcal L_{\bfi}(D_{\bfi,\ell^-})
-
\mathcal L_{\bfi}(D_{\bfi,\ell})
=
\mathcal L_{\bfi}(D_{\bfi}[\ell^-,\ell^-]).
\]
Since both sides are global basis elements with the same
\(\bfi\)-Lusztig parameter, we get
\[
\mu_\ell(D_{\bfi,\ell})
=
D_{\bfi}[\ell^-,\ell^-].
\]

Now assume that the claim has been proved up to \(r-1\). We prove it for
\(r\). At the moment when we mutate at \(a_{r-1}\), Lemma~\ref{lem:embedding-Qell}
shows that the outgoing arrows from \(a_{r-1}\) are horizontal arrows,
while the incoming arrows are ordinary arrows. Hence the exchange relation
for \(D_{\bfi,a_{r-1}}\) has the form
\[
D_{\bfi,a_{r-1}}\,
\mu_{a_{r-1}}\cdots\mu_{a_0}(D_{\bfi,a_{r-1}})
=
q^A D_{\bfi,a_r}\,D_{\bfi}[a_{r-1},\ell^-]
+
q^B M',
\]
where \(M'\) is the cluster monomial coming from ordinary arrows.

By the induction hypothesis,
\[
D_{\bfi}[a_{r-1},\ell^-]
\]
is the cluster variable obtained at the previous step. By the same
Lusztig-parameter comparison as above, the maximum in
Lemma~\ref{lem_prodglobal} is attained by the horizontal term
\[
D_{\bfi,a_r}\,D_{\bfi}[a_{r-1},\ell^-].
\]
Therefore
\[
\begin{aligned}
&\mathcal L_{\bfi}
\bigl(
\mu_{a_{r-1}}\cdots\mu_{a_0}(D_{\bfi,a_{r-1}})
\bigr) \\
&\qquad =
\mathcal L_{\bfi}(D_{\bfi,a_r})
+
\mathcal L_{\bfi}(D_{\bfi}[a_{r-1},\ell^-])
-
\mathcal L_{\bfi}(D_{\bfi,a_{r-1}}).
\end{aligned}
\]
By the defining additivity of Lusztig parameters for these quantum minors,
the right-hand side is precisely
\[
\mathcal L_{\bfi}(D_{\bfi}[a_r,\ell^-]).
\]
Again both elements are global basis elements, so they are equal:
\[
\mu_{a_{r-1}}\cdots\mu_{a_0}(D_{\bfi,a_{r-1}})
=
D_{\bfi}[a_r,\ell^-].
\]

This proves the desired identity for every predecessor \(a=a_r\) of
\(\ell\). Equivalently, for every \(a<\ell\) with \(i_a=i_\ell\), we have
\[
\widetilde{\mu}_\ell(D_{\bfi,a^+})
=
D_{\bfi}[a,\ell^-].
\]
Hence these quantum minors are cluster variables in the seed
\(\widetilde{\mu}_\ell(\mathbf t_{\bfi}).
\)
\end{proof}

\begin{theorem}
\label{thm:Mk-minors}
Let \(\bfi=(i_1,\dots,i_\ell)\) be an expression of
\(b\in\operatorname{Br}^+\). Assume that the quantum cluster algebra
\(\mathcal A(\mathbf t_\bfi)\) is contained in \(\widehat{\mathcal A}(b)\)
and that all cluster monomials of \(\mathcal A(\mathbf t_\bfi)\) belong
to the global basis of \(\widehat{\mathcal A}(b)\).

Then, for every \(k\in[1,\ell]\) and every vertex \(a\) of \(\widehat Q_k\),
one has
\begin{equation}
\label{eq_minorsMk}
M_k(D_{\bfi,a})
=
D_{\bfi}\bigl[a^{-k[i_a]},\,a_{\max}^{-k[i_a]}\bigr].
\end{equation}
Here \(a_{\max}=(i_a,n_{i_a})\) denotes the rightmost vertex of color
\(i_a\) in the word \(\bfi\). We use the convention that
\(D_{\bfi}[u,v]=1\) if the interval \([u,v]\) is empty.
\end{theorem}

\begin{proof}
For \(0\leq k\leq \ell\), write
\[
\bfi_k=(i_1,\dots,i_k).
\]
We first prove, by descending induction on \(k\), that there is an
isomorphism of quivers
\begin{equation}
\label{eq_isoPhik}
\Phi_k:\widehat Q_k\xrightarrow{\sim}Q_{\bfi_{k-1}}
\end{equation}
given in occurrence notation by
\[
\Phi_k(i,r)=(i,r-k[i]).
\]
Moreover, under this isomorphism, the frozen vertices of \(\widehat Q_k\)
correspond exactly to the frozen vertices of \(Q_{\bfi_{k-1}}\). That is, after deleting
the vertices \((i,p)\) with \(p\leq k[i]\), the remaining vertices
\[
(i,k[i]+1),\dots,(i,n_i)
\]
are relabelled as
\[
(i,1),\dots,(i,n_i-k[i])
\]
by the map
\[
\Phi_k(i,r)=(i,r-k[i]).
\]
Thus \(\Phi_k\) is a relabelling isomorphism.

For \(k=\ell\), this is precisely Lemma~\ref{lem:embedding-Qell}. Assume
the assertion is known for \(k+1\). Thus we have an isomorphism
\[
\Phi_{k+1}:\widehat Q_{k+1}\xrightarrow{\sim} Q_{\bfi_k}.
\]
The additional mutation sequence needed to pass from \(M_{k+1}\) to
\(M_k\) is the mutation sequence along the remaining vertices of color
\(i_k\), namely
\[
(i_k,k[i_k]+1)\longleftarrow\cdots\longleftarrow(i_k,n_{i_k}-1)\longleftarrow(i_k,n_{i_k})
\]
Applying \(\Phi_{k+1}\), this sequence becomes
\[
(i_k,2)\longleftarrow\cdots\longleftarrow(i_k,n_{i_k}-1-(k+1)[i_k])\longleftarrow(i_k,n_{i_k}-(k+1)[i_k])
\]
In the prefix word \(\bfi_k\), the terminal vertex \(k\) of color \(i_k\)
is \((i_k,n_{i_k}-(k+1)[i_k])\) in occurrence notation. Hence the displayed sequence $\widetilde\mu_k$ is exactly the mutation
sequence $\mu_{k_{\min}^+}\cdots\mu_{k}$ in
\(Q_{\bfi_k}\).

Therefore, under \(\Phi_{k+1}\), the mutation process producing
\(\widehat Q_k\) from \(\widehat Q_{k+1}\) corresponds to the mutation
process in Lemma~\ref{lem:embedding-Qell} applied to the word \(\bfi_k\).
That lemma gives an isomorphism
\[
\widehat Q_k\xrightarrow{\sim}Q_{\bfi_{k-1}},
\]
and, in occurrence notation, this is precisely
\[
\Phi_k(i,r)=(i,r-k[i]).
\]
The same lemma also shows that frozen vertices are identified with frozen
vertices. This proves \eqref{eq_isoPhik} for all \(k\).\\

\noindent
We now prove \eqref{eq_minorsMk}, again by descending induction on \(k\).

First consider \(k=\ell\). If \(i_a=i_\ell\), then the formula follows
from Lemma~\ref{lem_mutation_Daell}. Indeed, that
lemma gives
\[
M_\ell(D_{\bfi,a})
=
D_{\bfi}[a^-,\ell^-].
\]
Since \(\ell[i_\ell]=1\), this is exactly
\[
D_{\bfi}\bigl[a^{-\ell[i_a]},a_{\max}^{-\ell[i_a]}\bigr].
\]
If \(i_a\neq i_\ell\), then \(\ell[i_a]=0\), and the mutation sequence
\(\widetilde\mu_\ell\) does not mutate the vertex \(a\). Hence
\[
M_\ell(D_{\bfi,a})=D_{\bfi,a}
=
D_{\bfi}[a,a_{\max}]
=
D_{\bfi}\bigl[a^{-\ell[i_a]},a_{\max}^{-\ell[i_a]}\bigr].
\]
Thus the formula holds for \(k=\ell\).

Assume now that the formula has been proved for \(k+1\). We prove it for
\(k\). Let \(a\) be a vertex of \(\widehat Q_k\).
First suppose that \(i_a\neq i_k\). Then
\[
k[i_a]=(k+1)[i_a].
\]
Moreover, the additional mutations from \(M_{k+1}\) to \(M_k\) are
performed only at vertices of color \(i_k\). Hence the cluster variable
attached to \(a\) is not mutated during this extra sequence. Therefore,
by the induction hypothesis,
\[
M_k(D_{\bfi,a})
=
M_{k+1}(D_{\bfi,a})
=
D_{\bfi}\bigl[a^{-(k+1)[i_a]},a_{\max}^{-(k+1)[i_a]}\bigr].
\]
Since \(k[i_a]=(k+1)[i_a]\), this is exactly
\[
D_{\bfi}\bigl[a^{-k[i_a]},a_{\max}^{-k[i_a]}\bigr].
\]

It remains to consider the case \(i_a=i_k\). Write
\[
a=(i_k,p),
\qquad
p>k[i_k].
\]

For \(s=k,k+1\), set
\[
a^{[s]}:=\Phi_s(a).
\]
Since \(i_a=i_k\), we have
\[
k[i_a]=(k+1)[i_a]+1.
\]
Hence the two shifted vertices satisfy
\[
a^{[k]}=(a^{[k+1]})^-,
\qquad
(a^{[k]})^+=a^{[k+1]}.
\]
Moreover, under the map \(\Phi_{k+1}\), the rightmost vertex of color
\(i_k\) in the word $\bfi$ is identified with the terminal vertex
\(k\) of the prefix word \(\bfi_k\). After passing to \(\Phi_k\), this
vertex is shifted one step to the left, and hence is identified with
\(k^-\). Thus
\[
(a_{\max})^{[k+1]}=k,
\qquad
(a_{\max})^{[k]}=k^-.
\]
For an adjacent color \(i\neq i_k\), let \((i,n_i)\) be the rightmost
vertex of color \(i\) in the original word \(\bfi\). Since
\(k[i]=(k+1)[i]\), we have
\begin{equation}\label{eq_amax'}
\Phi_{k+1}(i,n_i)
=
(i,n_i-(k+1)[i])
=
k(i)^-.
\end{equation}
Therefore, let $c$ be a vertex in $\widehat{Q}_{k+1}$ with $i_c\neq i_k$, then the right endpoint $(c_{\max})^{[k+1]}$ of the cluster variable $D_\bfi[c^{[k+1]},(c_{\max})^{[k+1]}]$ is \(k(i_c)^-\).

Let us examine the local configuration at the vertex \(a=(i_k,p)\) in the
quiver
\[
\mu_{a^+}\cdots \mu_{k_{\max}}(\widehat Q_{k+1}).
\]
At this stage, the horizontal arrows around \(a\) have the form
\[
a^-\longleftarrow a\longrightarrow a^+.
\]
Under \(\Phi_{k+1}\), this is induced from the horizontal configuration
\[
(a^{[k+1]})^-
\longleftarrow
a^{[k+1]}
\longrightarrow
(a^{[k+1]})^+.
\]

By the isomorphism
\[
\widehat Q_{k+1}\simeq Q_{\bfi_k},
\]
the ordinary arrows with target \(a\) in \(\widehat Q_{k+1}\) correspond
to the ordinary arrows with target \(a^{[k+1]}\) in \(Q_{\bfi_k}\). Let
\(c\in[1,k]\) be such that \(c_{i_c i_a}=-1\), and consider the
two-color subquiver of \(Q_{\bfi_k}\) with colors \(i_c\) and
\(i_{a^{[k+1]}}\).

By Lemma~\ref{lem_first_ordinary}, the first ordinary arrow in this
two-color subquiver is oriented from a vertex of color \(i_c\) to a
vertex of color \(i_{a^{[k+1]}}\). We write this arrow as
\[
j^1\longrightarrow k^1,
\qquad
i_{j^1}=i_c,
\qquad
i_{k^1}=i_{a^{[k+1]}}.
\]

Now suppose that
\[
j^t\longrightarrow a^{[k+1]}
\]
is an ordinary arrow in the quiver
\[
\mu_{(a^{[k+1]})^+}\cdots \mu_k(Q_{\bfi_k}).
\]
Then, by the local configuration in
Figure~\ref{fig:four-local-configurations}\,(2), this is equivalent to
\[
(j^t)^- < (a^{[k+1]})^- < j^t.
\]
Hence \(j^t\) is the first vertex of color \(i_p\) lying to the right of
\((a^{[k+1]})^-\). Equivalently,
\begin{equation}\label{eq_arrowjt}
j^t
=
(a^{[k+1]})^-(i_p)^+
=
a^{[k]}(i_p)^+.
\end{equation}

For the vertices of color \(i_k\), we prove the formula \eqref{eq_minorsMk} by induction along
the additional mutation sequence
\[
(i_k,k[i_k]+1)\longleftarrow\cdots\longleftarrow(i_k,n_{i_k}).
\]
Assume that the formula has already been proved for the vertices mutated
after \(a\), in particular for \(a^+\).
For this vertex, \(a^+\) does not exist, and the corresponding factor in
the exchange relation is interpreted as \(1\). Then the cluster variable attached
to \(a^+\) is
\[
\mu_{a^+}
\left(
D_{\bfi}[(a^+)^{[k+1]},k]
\right)
=
D_{\bfi}[(a^+)^{[k]},k^-].
\]
Since
\[
(a^+)^{[k+1]}=(a^{[k+1]})^+,
\qquad
(a^+)^{[k]}=(a^{[k]})^+=a^{[k+1]},
\]
this becomes
\[
\mu_{a^+}
\left(
D_{\bfi}[(a^{[k+1]})^+,k]
\right)
=
D_{\bfi}[a^{[k+1]},k^-].
\]
By Lemma~\ref{lem:embedding-Qell} and the local description of the
two-color subquivers, at the moment of mutation at \(a\), the outgoing
arrows from \(a\) are precisely the two horizontal arrows, while the
incoming arrows are the ordinary arrows indexed by adjacent colors
\(i\) with \(c_{ii_k}=-1\). Therefore the exchange relation has the form
\begin{equation}
\label{eq_preTsystem}
\begin{aligned}
&
D_{\bfi}[a^{[k+1]},k]\,
\mu_a\left(D_{\bfi}[a^{[k+1]},k]\right)
\\
&\quad =
q^A
D_{\bfi}[a^{[k+1]},k^-]\,
D_{\bfi}[(a^-)^{[k+1]},k]
\\
&\qquad
+
q^B
\prod_{\substack{i\in I\\ c_{ii_k}=-1}}
D_{\bfi}[a^{[k]}(i)^+,k(i)^-].
\end{aligned}
\end{equation}
The first two factors on the right-hand side correspond
to the horizontal arrows, while the product corresponds to the ordinary
arrows with target \(a\).
The last term follows from \eqref{eq_amax'} and \eqref{eq_arrowjt}.
Here the product is taken over those adjacent colors \(i\) for which the
vertex \(a^{[k]}(i)^+\) exists; if no such vertex exists, the corresponding
factor is omitted. 

We compare the \(\bfi\)-Lusztig parameters of the two terms on the
right-hand side of \eqref{eq_preTsystem}. All factors in both exchange monomials have support contained in
\([1,k]\). Moreover, at the largest relevant coordinate \(k\), the first
exchange monomial has coordinate \(1\), whereas the ordinary-arrow
monomial has coordinate \(0\). Hence, with respect to the order of $\bfi$-Lusztig parameters, the first exchange monomial gives the
maximum.

The mutated variable is a cluster variable, hence a global basis element by
the assumption on \(\mathcal A(\mathbf t_\bfi)\). Applying Lemma~\ref{lem_prodglobal} to the exchange relation
\eqref{eq_preTsystem}, we obtain 

\[
\begin{aligned}
&\mathcal L_{\bfi}
\left(
\mu_a\left(D_{\bfi}[a^{[k+1]},k]\right)
\right)\\
=&
\mathcal L_{\bfi}(D_{\bfi}[a^{[k+1]},k^-])
+
\mathcal L_{\bfi}(D_{\bfi}[(a^-)^{[k+1]},k])
-
\mathcal L_{\bfi}(D_{\bfi}[a^{[k+1]},k])
\\
=&
\mathcal L_{\bfi}(D_{\bfi}[a^{[k]},k^-]).
\end{aligned}
\]
Here note that \((a^-)^{[k+1]}=a^{[k]}\). Since global
basis elements are uniquely determined by their \(\bfi\)-Lusztig
parameters, we obtain
\[
\mu_a
\left(D_{\bfi}[a^{[k+1]},k]\right)
=
D_{\bfi}[a^{[k]},k^-].
\]
This is exactly
\[
\mu_a
\left(
D_{\bfi}
[
a^{-(k+1)[i_a]},
a_{\max}^{-(k+1)[i_a]}
]
\right)
=
D_{\bfi}
[
a^{-k[i_a]},
a_{\max}^{-k[i_a]}
],
\]
because
\[
a^{[k+1]}=a^{-(k+1)[i_a]},
\qquad
a^{[k]}=a^{-k[i_a]},
\qquad
k=a_{\max}^{-(k+1)[i_a]},
\qquad
k^-=a_{\max}^{-k[i_a]}.
\]
This proves the desired formula for the vertex \(a\). 

Together with the case \(i_a\neq i_k\), this proves the induction step
from \(k+1\) to \(k\). Hence the formula holds for every \(k\in[1,\ell]\)
and every vertex \(a\) of \(\widehat Q_k\).
\end{proof}

\begin{theorem}
\label{thm_Tsystem}
 let $\bfi=(i_1,\dots,i_\ell)$ be an expression
of $b\in\operatorname{Br}^+$. We assume that the quantum cluster algebra
$\cA(\bt_\bfi)$ is contained in $\widehat{\cA}(b)$ and that all cluster
monomials of $\cA(\bt_\bfi)$ belong to the global basis of $\widehat{\cA}(b)$. Then the quantum minors satisfy the
following quantum $T$-system. For every $i$-box $[a,c]$, one has
\begin{equation}
\label{eq_Tsystem1}
D_{\bfi}[a^+,c]\,
D_{\bfi}[a,c^-]
=
q^A
D_{\bfi}[a,c]\,
D_{\bfi}[a^+,c^-]
+
q^B
\prod_{\substack{j\in I\\ c_{i_a j}c_{j i_a}=1}}
D_{\bfi}[a(j)^+,c(j)^-],
\end{equation}
for some $A,B\in\mathbb Z$. Here we use the convention that
$D_{\bfi}[u,v]=1$ whenever $u>v$.
\end{theorem}

\begin{proof}
The relation is obtained from the exchange relation appearing in
\eqref{eq_preTsystem}. Indeed, by Theorem~\ref{thm:Mk-minors}, the
cluster variables produced along the mutation sequence $M_k$ are
identified with the quantum minors
\[
D_{\bfi}[u,v].
\]
Under this identification, the two monomials in the exchange relation
correspond respectively to
\[
D_{\bfi}[a,c]\,
D_{\bfi}[a^+,c^-]
\]
and
\[
\prod_{\substack{j\in I\\ c_{i_a j}c_{j i_a}=1}}
D_{\bfi}[a(j)^+,c(j)^-].
\]
Here the mutated variable is $D_{\bfi}[a^+,b]$, and the new variable obtained after mutation is $D_{\bfi}[a,b^-]$. Therefore the exchange relation becomes
precisely \eqref{eq_Tsystem1}, up to the powers $q^A$ and $q^B$ coming
from the quantum commutation factors.
\end{proof}

\section{Monoidal categorification in the simply-laced Dynkin type}\label{sec_monoidal}
In this section, we will focus on the simply-laced case and assume that the Cartan matrix $C$ is ADE types.

\subsection{Q-Datum}
Let \( \Delta \) be a simply-laced Dynkin diagram with vertex set \( I \) and edge set \( \Delta_1 \). A \emph{height function} \( \xi = (\xi_i)_{i \in I} \) on \( \Delta \) is defined by the condition:

\[
|\xi_i - \xi_j| = 1 \quad \text{for any } i, j \in I \text{ such that } c_{ij} = -1.
\]

The height \( \xi \) induces an orientation on \( \Delta \), where an arrow \( i \to j \) exists if and only if \( \xi_i > \xi_j \) and \(c_{ij}=-1\).

\begin{definition}
A \emph{$Q$-datum} is a pair \( (\Delta, \xi) \), where \( \Delta \) is a simply-laced Dynkin diagram and \( \xi \) is a height function on \( \Delta \).
\end{definition}

For a vertex \( i \in I \) that is a \emph{source} in the quiver associated with \( \xi \) (i.e., there are no arrows \( j \to i \)), the \emph{reflection} \( s_i \) acts on the height function by:
\[
s_i(\xi_j) = \xi_j - 2 \delta_{ij}.
\]

Let \( w_0 \) be the longest element of the Weyl group associated with \( \Delta \), with length \( \ell \), and let \( \underline{w_0} = (i_1, \ldots, i_\ell) \) be a reduced expression for \( w_0 \). The expression \( \underline{w_0} \) is \emph{adapted} to the \( Q \)-datum \( (\Delta, \xi) \) if, for each \( k \leq \ell \), the vertex \( i_k \) is a source in the quiver associated with the height:
\[
s_{i_{k-1}} \cdots s_{i_1} \xi.
\]

For an adapted reduced expression \( \underline{w_0} \), we extend it to an infinite sequence \( \widehat w_0 = \{ i_k \}_{k \in \mathbb{Z}} \) by:
\[
i_{k + \ell} = i_k^*, \quad \text{where } i^* \text{ satisfies } w_0(\alpha_i) = -\alpha_{i^*}.
\]

It follows that for any \( k \in \mathbb{Z} \):
\[
s_{i_k} \cdots s_{i_{k + \ell}} = w_0.
\]

Additionally, the action of \( w_0 \) on the height function satisfies:
\[
(w_0 \xi)_i = \xi_{i^*} - h,
\]

where \( h \) is the Coxeter number of \( \Delta \).

Define the set:
\begin{equation}\label{eq_DeltaHat}
\widehat{\Delta} := \left\{ (i, p) \in I \times \mathbb{Z} \mid p - \xi_i \in 2 \mathbb{Z} \right\}.
\end{equation}

For each integer \( k \), define the subset:
\begin{equation}\label{eq_DeltaQk}
\widehat{\Delta}_Q[k] := \left\{ (i, p) \in \widehat{\Delta} \mid \xi_{i^*} - (k + 1)h < p \leq \xi_i - k h \right\}.
\end{equation}

Let \( \widehat{\Delta}_Q := \widehat{\Delta}_Q[0] \), and define the negative part:
\[
\widehat{\Delta}_{\leq \xi}:= \left\{ (i, p) \in \widehat{\Delta} \mid p \leq \xi_i \right\}.
\]

For an adapted infinite sequence \( \widehat w_0 = \{ i_k \}_{k \in \mathbb{Z}} \), define the sequence of integers \( p_k \) by:

\begin{equation}\label{eq_pk}
p_k = \begin{cases}
    \left( s_{i_{k-1}} \cdots s_{i_1} \xi \right)_{i_k} & \text{if } k > 0, \\
    \left( s_{i_k}^{-1} \cdots s_{i_0}^{-1} \xi \right)_{i_k} & \text{if } k \leq 0.
\end{cases}
\end{equation}

\begin{theorem}[\cite{kashiwara2024monoidal}, Proposition~6.11]
\label{theo_w0Delta}
Let
\[
\widehat w_0=(i_k)_{k\in\mathbb Z}
\]
be an adapted sequence associated with a \(Q\)-datum \((\Delta,\xi)\). Then
the assignment
\[
\varphi_{\widehat w_0}:\mathbb Z\longrightarrow \widehat{\Delta},
\qquad
k\longmapsto (i_k,p_k),
\]
is a bijection. Equivalently, the adapted sequence \(\widehat w_0\), together
with the integers \(p_k\), gives an enumeration of the vertex set
\(\widehat{\Delta}\).
\end{theorem}

There exists a bijection:
\begin{equation}\label{eq_phi}
\phi: \widehat{\Delta} \to R^+ \times \mathbb{Z},
\end{equation}

such that \( \phi((i, \xi_i)) = (\gamma_i, 0) \), where \( \gamma_i \) is the root corresponding to the dimension vector of the injective representation \( I_i \) of the quiver associated with the \( Q \)-datum \( (\Delta, \xi) \). Furthermore, if \( \phi((i, p)) = (\alpha, k) \), then:
\[
\phi((i, p + 2)) = \begin{cases}
    (\tau \alpha, k) & \text{if } \tau \alpha \in R^+, \\
    (-\tau \alpha, k + 1) & \text{if } \tau \alpha \in R^-,
\end{cases}
\]
where \( \tau \) is the Coxeter element for the quiver.

\subsection{Quantum loop algebras}
This section introduces the quantum loop algebra \( U_q(L\mathfrak{g}) \) associated with a simple Lie algebra \( \mathfrak{g} \), along with its module categories and related algebraic structures.

\subsubsection{Modules of the quantum loop algebra}
Let \( \mathscr{C} \) denote the category of finite-dimensional \( U_q(L\mathfrak{g}) \)-modules of type 1. Each simple module \( M \in \mathscr{C} \) is characterized by a tuple of Drinfeld polynomials \( (P_i)_{i \in I} \), where \( P_i \in 1 + q \mathbb{Z}[q] \). The roots of these polynomials determine a dominant monomial:
\[
m = \prod_{(i, a)} Y_{i, a},
\]
where the product is over the roots \( a \) of \( P_i \). If \( a = q^k \), we write \( Y_{i, k} \) for \( Y_{i, q^k} \).

Define \( \mathcal{M} \) as the set of all dominant monomials, and let \( \mathcal{M}^+ \subset \mathcal{M} \) consist of monomials of the form:
\[
m = \prod_{(i, k) \in \widehat{\Delta}} Y_{i, k}^{u_{i, k}}.
\]

For each pair \( (i, p) \in I\times\ZZ \) such that \( (i, p - d_i) \in \widehat{\Delta} \), define the element:
\begin{equation}\label{eq_Aip}
A_{i, p} = Y_{i, p - d_i} Y_{i, p + d_i} \prod_{\substack{(j, s) \in \widehat{\Delta} \ j \sim i, |s - p| < d_i}} Y_{j, s}^{-1}.
\end{equation}

The \emph{Hernandez--Leclerc category} \( \mathscr{C}^{\mathbb{Z}} \) is the Serre subcategory of \( \mathscr{C} \) generated by simple modules \( L(m) \) for \( m \in \mathcal{M}^+ \). For a height\( \xi'\), the subcategory \( \mathscr{C}_{\leq \xi} \subset \mathscr{C}^{\mathbb{Z}} \) is the Serre subcategory generated by simple modules \( L(m) \) with:
\[
m = \prod_{(i, p) \in \widehat{\Delta}_{\leq \xi'}} Y_{(i, p)}^{u_{(i, p)}}.
\]

The simple module \( L(Y_{i, k}) \) is called a \emph{fundamental module}. For an interval \( [a, b] \) such that \( (i, a), (i, b) \in \widehat{\Delta} \), the \emph{Kirillov-Reshetikhin module} is defined as:
\[
M^{(i)}[a, b] := L\left( Y_{i, a} Y_{i, a + 2} \cdots Y_{i, b} \right).
\]

\subsubsection{Quantum Grothendieck rings}
Assume the Cartan matrix \( C \) is simply-laced. The \emph{quantum Cartan matrix} \( C(q) \) is defined as:

\[
C(q)_{ij} = \begin{cases}
    q_i + q_i^{-1} & \text{if } i = j, \\
    [c_{ij}]_i & \text{if } i \neq j.
\end{cases}
\]
Since \( C \) is invertible, \( C(q) \) has an inverse matrix \( \widetilde{C}(q) = \sum_{u \in \mathbb{Z}} \widetilde{c}(u) z^u \). For any \( (i, p), (j, q) \in \widehat{\Delta} \), define:
\begin{equation}\label{eq_cN}
\mathcal{N}((i, p), (j, q)) := \widetilde{c}(p - q - d_i) - \widetilde{c}(p - q + d_i) + \widetilde{c}(q - p - d_i) - \widetilde{c}(q - p + d_i).
\end{equation}

The \emph{quantum torus} \( \mathcal{Y}_q \) is the \( \mathbb{Z}[q^{\pm 1/2}] \)-algebra generated by \( Y_{i, p}^{\pm 1} \) for \( (i, p) \in \widehat{\Delta} \), subject to the relations:
\begin{align}
    Y_{i, p} * Y_{j, q} &= q^{\mathcal{N}((i, p), (j, q))} Y_{j, q} Y_{i, p}, \label{eq:qtorus1} \\
    Y_{i, p} * Y_{i, p}^{-1} &= Y_{i, p}^{-1} * Y_{i, p} = 1. \label{eq:qtorus2}
\end{align}

For a vector \( \mathbf{a} = (a_{i, p}) \in \mathbb{Z}^{\widehat{\Delta}} \), define the monomial:
\[
m(\mathbf{a}) = q^{A_{\mathbf{a}}} \overrightarrow{\prod}_{(i, k) \in \widehat{\Delta}} Y_{(i, k)}^{a_{i, k}},
\]

where \( A_{\mathbf{a}} \) is chosen such that the product is independent of the ordering.\\

\noindent
For each \( i \in I \), the subalgebra \( \mathcal{K}_{i, q} \subset \mathcal{Y}_q \) is generated over \( \mathbb{Z}[q^{1/2}] \) by:

\[
\left\{ Y_{i, k} (1 + q^{-1} A_{i, k + d_i}^{-1}) \mid (i, k) \in \widehat{\Delta} \right\} \cup \left\{ Y_{j, l} \mid (j, l) \in \widehat{\Delta}, j \neq i \right\}.
\]

The \emph{quantum Grothendieck group} \( K_q(\mathscr{C}^{\mathbb{Z}}) \) is defined as:

\[
K_t(\mathscr{C}^{\mathbb{Z}}) := \bigcap_{i \in I} \mathcal{K}_{i, q}.
\]

\begin{theorem}[\cite{hernandez2015quantum}, \cite{kashiwara2024global}]\label{theo_quantumgroth}
There exists a \( \mathbb{Z}[q^{\pm 1/2}] \)-algebra isomorphism:
\[
\Phi: \widehat{\mathcal A} \to K_t(\mathscr{C}^{\mathbb{Z}}),
\]
mapping the generator \( x_{i, m} \) to the \( (q, t) \)-character of the simple module \( L(Y_{i^{*m}, p_i + m h}) \), where \( (i, p_i) = \phi^{-1}(\alpha_i, 0) \). The basis formed by the \( (q, t) \)-characters \( \chi_t(L(m)) \) of simple modules \( L(m) \) coincides with the global basis \( \widehat{\mathbf{B}}(\infty) \). See \cite{hernandez2015quantum} for the definition of \( (q, t) \)-characters. We denote this basis in $K_t(\mathscr C^\ZZ)$ by $\mathbf{L}$.
\end{theorem}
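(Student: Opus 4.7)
The plan is to construct $\Phi$ explicitly on the generators and verify it extends to an algebra isomorphism intertwining the two distinguished bases. First, I would set $\Phi(x_{i,m}) := \chi_t(L(Y_{i^{*m}, p_i + mh}))$ and check that the defining relations \eqref{eq:rel1}--\eqref{eq:rel3} of $\widehat{\mathcal{A}}$ are satisfied by these images inside $K_q(\mathscr{C}^{\mathbb{Z}})$. The far-apart commutation relation \eqref{eq:rel3} should follow from the explicit formula \eqref{eq_cN} for $\mathcal{N}((i,p),(j,q))$ in the quantum torus $\mathcal{Y}_q$: when $m-n > 1$, the corresponding pair in $\widehat{\Delta}$ is sufficiently separated that $\mathcal{N}$ collapses to a multiple of $(\alpha_i,\alpha_j)$ with the correct sign $(-1)^{m-n+1}$. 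The nearest-neighbor relation \eqref{eq:rel2}, with the anomalous term $\delta_{ij}(1-q_i^2)$, should be matched by the $T$-system identity for a pair of fundamental modules at consecutive heights $(i,p)$ and $(i,p+2)$ in $\widehat{\Delta}$; this is essentially the simplest instance of Conjecture \ref{con_Tsystem}, which is known in Dynkin type. The quantum Serre relation \eqref{eq:rel1} lives entirely within a single slice $\widehat{\Delta}_Q[k]$ and can therefore be reduced to the classical quantum Serre relations for $A_q(\mathfrak{n})$ via the isomorphism $\widehat{\mathcal{A}}[k] \cong A_q(\mathfrak{n})$ from the cited Corollary 4.4 of \cite{kashiwara2024global}.

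Next, to upgrade the resulting well-defined homomorphism $\Phi$ to an isomorphism, I would transport the PBW basis of Proposition \ref{pro_cAboso}. Choosing an adapted infinite sequence $\widehat{w_0} = (i_k)_{k \in \mathbb{Z}}$ for the $Q$-datum $(\Delta,\xi)$, Theorem \ref{theo_w0Delta} identifies the parameter set $\widehat{w_0}$ with $\widehat{\Delta}$, and the root elements $E(\beta_k)$ go, under $\Phi$, to the fundamental $(q,t)$-characters indexed by $(i_k,p_k)$. Since the fundamental $(q,t)$-characters generate $K_q(\mathscr{C}^{\mathbb{Z}})$ (a classical result, ultimately due to Frenkel--Reshetikhin and reformulated by Hernandez), $\Phi$ is surjective; injectivity is immediate from the linear independence of the PBW monomials on the source side, matched against the standard monomials $m(\mathbf{a})$ which form a basis of the quantum torus side.

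Finally, to identify the global basis with the simple $(q,t)$-characters, I would invoke uniqueness. By Proposition \ref{pro_cAboso}(3), $\{B(\mathbf{b})\}$ is pinned down by bar-invariance together with a unitriangular expansion against the PBW basis whose off-diagonal coefficients lie in $q\mathbb{Z}[q]$; the simple $(q,t)$-characters $\chi_t(L(m))$ for $m \in \mathcal{M}^+$ satisfy the analogous Nakajima-style characterization against the standard monomials. Since $\Phi$ is bar-equivariant by construction and sends PBW monomials to standard monomials modulo lower terms, uniqueness forces $\Phi(B(\mathbf{b})) = \chi_t(L(m_{\mathbf{b}}))$ for the appropriate dominant monomial. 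The main obstacle I anticipate is the verification of relations \eqref{eq:rel1} and \eqref{eq:rel2} with precisely the correct $q$-powers: the bookkeeping via $\mathcal{N}$ is delicate, and aligning the $q_i$-binomial coefficients appearing in the quantum Serre relation with the combinatorial data of $\widehat{\Delta}_Q$ requires the careful computation that is the technical heart of \cite{hernandez2015quantum} and \cite{kashiwara2024global}.
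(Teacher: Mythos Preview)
The paper does not prove this theorem. It is stated as a result imported from \cite{hernandez2015quantum} and \cite{kashiwara2024global} and is used as a black box (for instance in the proof of Theorem \ref{theo_braidiso}); no argument for it appears in the present paper, so there is nothing here to compare your proposal against.

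That said, your outline is a fair sketch of the strategy in the cited sources, and it is worth flagging two places where it is looser than the actual arguments. First, injectivity is not ``immediate'' from linear independence of PBW monomials on the source side: you need that their \emph{images} remain linearly independent in $K_q(\mathscr{C}^{\mathbb{Z}})$, which requires a nontrivial fact about standard $(q,t)$-characters (distinct dominant leading monomials in $\mathcal{Y}_q$). Second, Proposition \ref{pro_cAboso} gives PBW and global bases only for $\widehat{\mathcal{A}}(b)$ with $b\in\operatorname{Br}^+$, not for all of $\widehat{\mathcal{A}}$; to run your argument on the full algebra you must pass through an exhaustion by the $\widehat{\mathcal{A}}(\Delta^m)$ (or equivalently the $\widehat{\mathcal{A}}[0,m]$), which is exactly what the references do. The claim that $\Phi$ is ``bar-equivariant by construction'' also hides real work: compatibility of $c$ with the bar on $K_q$ rests on the bar-invariance of fundamental $(q,t)$-characters, an input from Nakajima's theory rather than something visible from the definition of $\Phi$ on generators.
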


\subsection{Cluster structures on Bosonic extension algebras}

The goal of this section is to prove our main theorem: Conjecture~\ref{con_boscluster}
holds for any expression of any braid group element
$b\in \operatorname{Br}^+$.

We first recall the quantum cluster structure on the quantum Grothendieck
ring $K_t(\mathscr C_{\le \xi})$, which will serve as the starting point
for the proof.
\begin{proposition}{\cite[Theorem~6.6]{fujita2023isomorphisms}}\label{pro_quantumclusteriso}
    Let $\widehat w_0$ be an adapted sequences of a $Q$-datum $(\Delta,\xi)$ and $\xi'$ be another height of $\Delta$. Let $\bfi_{\xi'}$ be the subsequence $\varphi^{-1}(\widehat{\Delta}_{\leq\xi'})$ of $\widehat w_0$. Then $K_t(\mathscr C_{\leq \xi})$ is a quantum cluster algebra with initial seed
    \[(\{M^{(i)}[a,\xi'_i]\}_{(i,a)\in \Delta_{\leq \xi'}}, B_{\bfi_{\xi'}},\Lambda_{\bfi_{\xi'}}).\]
    Moreover, the cluster monomials are contained in the canonical basis $\mathbf{L}$ of $K_t(\mathscr C_{\leq \xi'})$. 
\end{proposition}

\begin{definition}\label{def_root_modules}
Let $(\Delta,\xi)$ be a $Q$-datum and let
$\widehat w_0=(i_k)_{k\in\ZZ}$ be an adapted expression. For each
$k\in\ZZ$, define the root module
\[
C_k^{\widehat w_0}:=L(Y_{i_k,p_k}),
\]
where $p_k$ is given in \eqref{eq_pk}.
\end{definition}

By \cite{kashiwara2024monoidal}, the inverse image of
$\chi_t(C_k^{\widehat w_0})$ under the isomorphism $\Phi$ is the
global basis element $E_k^{\widehat w_0}$ associated with the expression
$\widehat w_0$ of $\Delta_{\rm Gar}^\infty$.

For an interval $[a,c]\subset\ZZ$, let $\mathscr C^{[a,c]}$ be the full
subcategory of $\mathscr C^\ZZ$ generated by the modules
$C_k^{\widehat w_0}$ for $k\in[a,c]$, and closed under subquotients,
extensions, and tensor products. We denote by $\widehat w_0[a,c]$ the
subword of $\widehat w_0$ indexed by $[a,c]$ and by $[a,c]^{\operatorname{ex}}$ the subset of $[a,c]$ consisting of elements $u$ with $u^-<a$. 

\begin{theorem}
\label{thm_KtCab}
Let $\widehat w_0$ be an adapted expression associated with a
$Q$-datum $(\Delta,\xi)$. Then $K_t(\mathscr C^{[a,c]})$ admits a
quantum cluster algebra structure with initial seed
\[
\bt_{[a,c]}
=
\bigl(
\{\Phi(D_{\widehat w_0,s})\}_{s\in[a,c]},
B_{\widehat w_0[a,c]},
\Lambda_{\widehat w_0[a,c]},
[a,c]^{\operatorname{ex}}
\bigr).
\]
Moreover, all cluster monomials belong to the canonical basis $\mathbf{L}$ of
$K_t(\mathscr C^{[a,c]})$.
\end{theorem}

\begin{proof}
Write
\[
\widehat w_0[a,c]=(i_a,\dots,i_c).
\]
We first consider the semi-infinite interval \((-\infty,c]\). For each
\(j\in I\), let
\[
k_j=\max\{k\leq c\mid i_k=j\},
\]
and set
\[
\xi'(j)=p_{k_j}.
\]
Since \(\widehat w_0\) is adapted, \(\xi'\) is again a height function.
By the definition of the root modules, we have
\[
K_t(\mathscr C^{(-\infty,c]})
=
K_t(\mathscr C_{\leq \xi'}).
\]
Moreover, for every \(k\leq c\),
\[
M^{(i_k)}[p_k,\xi'_{i_k}]
=
\Phi(D_{\widehat w_0,k}).
\]
Therefore Proposition~\ref{pro_quantumclusteriso} gives a quantum cluster
algebra structure on \(K_t(\mathscr C^{(-\infty,c]})\), with initial seed
\(\bt_{(-\infty,c]}\), and all its cluster monomials belong to the
canonical basis \(\mathbf L\).

Now consider the finite interval \([a,c]\). By the explicit form of the
exchange matrix for the adapted word, we have
\[
B_{(-\infty,a-1]\times [a,c]^{\operatorname{ex}}}=0.
\]
Hence Proposition~\ref{pro:subcluster} implies that the seed
\(\bt_{[a,c]}\) defines a subcluster algebra
\[
\mathcal A(\bt_{[a,c]})
\subset
K_t(\mathscr C^{(-\infty,c]}).
\]
Moreover, every cluster monomial of \(\mathcal A(\bt_{[a,c]})\) is a
cluster monomial of \(K_t(\mathscr C^{(-\infty,c]})\), and hence belongs
to the canonical basis \(\mathbf L\).

We next prove that
\[
\mathcal A(\bt_{[a,c]})
\subset
K_t(\mathscr C^{[a,c]}).
\]
We argue by induction on the length of a mutation sequence. The initial
cluster variables
\[
\Phi(D_{\widehat w_0,s}),\qquad s\in[a,c],
\]
belong to \(K_t(\mathscr C^{[a,c]})\) by definition.

Assume that all cluster variables in a seed \(\bt\) obtained from
\(\bt_{[a,c]}\) belong to \(K_t(\mathscr C^{[a,c]})\). Let \(X_k\) be a
mutable variable of \(\bt\), and write the exchange relation as
\[
X_k\mu_k(X_k)=q^A X+q^B Y,
\]
where \(X\) and \(Y\) are cluster monomials in \(\bt\). By the induction
hypothesis, \(X_k,X,Y\in K_t(\mathscr C^{[a,c]})\). Since cluster
monomials in the semi-infinite cluster algebra belong to the canonical
basis, Lemma~\ref{lem_prodglobal} gives
\[
\mathcal L_{\widehat w_0}(\Phi^{-1}(X_k))
+
\mathcal L_{\widehat w_0}(\Phi^{-1}(\mu_k(X_k)))
=
\max\left\{
\mathcal L_{\widehat w_0}(\Phi^{-1}(X)),
\mathcal L_{\widehat w_0}(\Phi^{-1}(Y))
\right\}.
\]
The Lusztig parameters of \(X_k,X,Y\) are supported on \([a,c]\). Hence
the Lusztig parameter of \(\mu_k(X_k)\) is also supported on \([a,c]\).
By Proposition~\ref{prop_pbw_global} and the definition of
\(\mathscr C^{[a,c]}\), this implies
\[
\mu_k(X_k)\in K_t(\mathscr C^{[a,c]}).
\]
Thus
\[
\mathcal A(\bt_{[a,c]})
\subset
K_t(\mathscr C^{[a,c]}).
\]

Conversely, by Theorem~\ref{thm:Mk-minors}, applied to the seed
\(\bt_{[a,c]}\), the mutation sequence \(M_1\) produces the root modules
\[
C_k^{\widehat w_0}=\Phi(D_{\widehat{w}_0}[k,k]),\qquad k\in[a,c],
\]
as cluster variables. Since these root modules generate
\(K_t(\mathscr C^{[a,c]})\), we obtain
\[
K_t(\mathscr C^{[a,c]})
\subset
\mathcal A(\bt_{[a,c]}).
\]
Therefore
\[
K_t(\mathscr C^{[a,c]})
=
\mathcal A(\bt_{[a,c]}).
\]

Finally, as observed above, every cluster monomial of
\(\mathcal A(\bt_{[a,c]})\) is a cluster monomial of the semi-infinite
cluster algebra \(K_t(\mathscr C^{(-\infty,c]})\). Hence it belongs to the
canonical basis \(\mathbf L\). This proves the theorem.
\end{proof}

\begin{corollary}
\label{theo_K1kl}
Let \(\widehat w_0=(i_1,i_2,\ldots)\) be an adapted sequence associated with
a \(Q\)-datum. For every \(k\geq 1\), set
\[
\Delta_{\rm Gar}^k
=
\sigma_{i_1}\cdots \sigma_{i_{k\ell(w_0)}}.
\]
Then there is an isomorphism
\[
\widehat{\mathcal A}(\Delta_{\rm Gar}^k)
=
\widehat{\mathcal A}[0,k-1]
\simeq
K_t(\mathscr C^{[1,k\ell(w_0)]}).
\]
Moreover, under this isomorphism, the global basis of
\(\widehat{\mathcal A}(\Delta_{\rm Gar}^k)\) coincides with the canonical
basis \(\mathbf L\) of \(K_t(\mathscr C^{[1,k\ell(w_0)]})\).

In particular, \(\widehat{\mathcal A}(\Delta_{\rm Gar}^k)\) is a quantum
cluster algebra with initial seed associated with the expression
\[
(i_1,\ldots,i_{k\ell(w_0)}),
\]
and its cluster monomials belong to the global basis. Hence
Conjecture~\ref{con_boscluster} holds for the adapted expression of
\(\Delta_{\rm Gar}^k\) induced by \(\widehat w_0\).
\end{corollary}

\begin{proof}
Write \(\widehat w_0=(i_1,i_2,\ldots)\). For \(k\geq 1\), the word
\[
(i_1,\ldots,i_{k\ell(w_0)})
\]
is the adapted expression of \(\Delta_{\rm Gar}^k\). By the construction of
the bosonic extension algebra, we have
\[
\widehat{\mathcal A}[0,k-1]
=
\widehat{\mathcal A}(\Delta_{\rm Gar}^k).
\]
By Theorem~\ref{theo_quantumgroth}, the isomorphism $\Phi$ induces an isomorphism
\[
\widehat{\mathcal A}[0,k-1]
\simeq
K_t(\mathscr C^{[1,k\ell(w_0)]}),
\]
which identifies the global basis of
\(\widehat{\mathcal A}[0,k-1]\) with the canonical basis \(\mathbf L\) of
\(K_t(\mathscr C^{[1,k\ell(w_0)]})\).

By Theorem \ref{thm_KtCab}, the Grothendieck ring
\(K_t(\mathscr C^{[1,k\ell(w_0)]})\) is a quantum cluster algebra whose
cluster monomials belong to \(\mathbf L\). Transporting this cluster
structure through the above isomorphism gives a quantum cluster algebra
structure on \(\widehat{\mathcal A}(\Delta_{\rm Gar}^k)\), with initial
seed associated with the adapted expression
\[
\Delta_{\rm Gar}^k
=
\sigma_{i_1}\cdots\sigma_{i_{k\ell(w_0)}}.
\]
Since the isomorphism identifies \(\mathbf L\) with the global basis, the
cluster monomials belong to the global basis. Hence
Conjecture~\ref{con_boscluster} holds for the expression of
\(\Delta_{\rm Gar}^k\) induced by \(\widehat w_0\).
\end{proof}

\begin{theorem}
\label{theo_Dynkincase}
In simply-laced Dynkin type, Conjecture~\ref{con_boscluster} holds for every
$b\in\Br^+$.
\end{theorem}

\begin{proof}
Let $\widehat w_0=(i_1,i_2,\dots)$ be an adapted sequence associated with
a $Q$-datum $\mathscr L=(\Delta,\xi)$. For $m\ge1$, set
\[
\Delta_{\mathrm{Gar}}^m
=
\sigma_{i_1}\cdots \sigma_{i_{m\ell(w_0)}}.
\]
By Corollary~\ref{theo_K1kl}, the algebra
\[
\widehat{\mathcal A}(\Delta_{\mathrm{Gar}}^m)
\simeq
K_t(\mathscr C^{[1,m\ell(w_0)]})
\]
admits a quantum cluster algebra structure whose cluster monomials belong
to the global basis.

Since $\operatorname{Br}^+$ is a Garside monoid in finite type, by \cite[Corollary~6.9]{oh2025pbw} every
$b\in\operatorname{Br}^+$ is a right divisor of a sufficiently large
power of the Garside element $\Delta_{\mathrm{Gar}}$. Hence there exist
$m\ge1$ and $u\in\operatorname{Br}^+$ such that
\[
ub=\Delta_{\mathrm{Gar}}^m.
\]
Choose expressions $\bfk=(k_1,\dots,k_n)$ and
$\bfj=(j_1,\dots,j_\ell)$ of $u$ and $b$, respectively, and set
\[
\bfi=(\bfk,\bfj)
=
(k_1,\dots,k_n,j_1,\dots,j_\ell).
\]
Thus $\bfi$ is an expression of $\Delta_{\mathrm{Gar}}^m$. By
Corollary~\ref{theo_K1kl} and Theorem~\ref{theo_words}, the algebra
$\widehat{\mathcal A}(\Delta_{\mathrm{Gar}}^m)$ has a quantum cluster
structure with initial seed $\mathbf t_{\bfi}$, and all cluster monomials
belong to the global basis.

For $s>n$, the root vector attached to the $s$-th position of $\bfi$ is
obtained from the corresponding root vector of $\bfj$ by the braid action
$T_u$:
\[
E_s^{\bfi}=T_u(E_{s-n}^{\bfj}).
\]
Therefore the subalgebra of $\widehat{\mathcal A}(\Delta_{\mathrm{Gar}}^m)$
generated by the root vectors $E_s^{\bfi}$ with $s>n$ is precisely
$T_u\widehat{\mathcal A}(b)$.

We define the tail seed
\[
\mathbf t_{\bfj}^{\,u}
:=
\bigl(
(D_{\bfi}[a,n+\ell\})_{n<a\le n+\ell},
\Lambda_{\bfj},
B_{\bfj},
K^{\operatorname{ex}}
\bigr),
\]
where
\[
K^{\operatorname{ex}}
=
\{\,a\in\{n+1,\dots,n+\ell\}\mid a^->n\,\}.
\]
Here $\Lambda_{\bfj}$ and $B_{\bfj}$ are identified with the restrictions
of $\Lambda_{\bfi}$ and $B_{\bfi}$ to the tail positions
$\{n+1,\dots,n+\ell\}$ via the shift $a\mapsto a-n$. Thus
$\mathbf t_{\bfj}^{\,u}$ is naturally identified with the tail subseed of
$\mathbf t_{\bfi}$.

We claim that
\[
B_{[1,n]\times K^{\operatorname{ex}}}=0.
\]
Indeed, suppose that there were an arrow between a vertex
$k\in[1,n]$ and a vertex $j\in K^{\operatorname{ex}}$. Since
$j\in K^{\operatorname{ex}}$, we have $j^->n$. An ordinary arrow would
force one of the interlacing inequalities
\[
k^-<j^-<k<j
\qquad\text{or}\qquad
j^-<k^-<j<k,
\]
which is impossible because $k\le n<j^-$. A horizontal arrow is also
impossible: such an arrow across the cut would require the predecessor of
a tail vertex to lie in $[1,n]$, contradicting $j^->n$. Hence the claimed
vanishing holds.

By Proposition~\ref{pro:subcluster}, this vanishing implies
\[
\mathcal A(\mathbf t_{\bfj}^{\,u})
\subset
\mathcal A(\mathbf t_{\bfi})
=
\widehat{\mathcal A}(\Delta_{\mathrm{Gar}}^m).
\]
Moreover, the cluster monomials of $\mathcal A(\mathbf t_{\bfj}^{\,u})$
are cluster monomials of $\mathcal A(\mathbf t_{\bfi})$, and therefore
belong to the global basis of
$\widehat{\mathcal A}(\Delta_{\mathrm{Gar}}^m)$.

We next prove that
\[
\mathcal A(\mathbf t_{\bfj}^{\,u})
\subset
T_u\widehat{\mathcal A}(b).
\]
The initial cluster variables of $\mathbf t_{\bfj}^{\,u}$ belong to
$T_u\widehat{\mathcal A}(b)$ by construction. Suppose inductively that
all cluster variables in a seed $\mathbf t'$ obtained from
$\mathbf t_{\bfj}^{\,u}$ belong to $T_u\widehat{\mathcal A}(b)$. Let
$X_k$ be a mutable variable of $\mathbf t'$, and write the exchange
relation as
\[
X_k\,\mu_k(X_k)=q^A D_1+q^B D_2
\]
for some $A,B\in\mathbb Z$, where $D_1$ and $D_2$ are cluster monomials
in $\mathbf t'$.

Since $\mu_k(X_k)$ is a cluster variable in
$\mathcal A(\mathbf t_{\bfi})$, it is a global basis element. Write
\[
\mu_k(X_k)=B(\bfi,\bfa).
\]
By Lemma~\ref{lem_prodglobal}, the Lusztig parameter of the mutated
variable satisfies
\[
\bfa+\mathcal L_{\bfi}(X_k)
=
\max\{
\mathcal L_{\bfi}(D_1),
\mathcal L_{\bfi}(D_2)
\}.
\]
We use the following consequence of Lemma~\ref{lem:Tu-global-basis}: a
global basis element $B(\bfi,\bfa)$ of
$\widehat{\mathcal A}(\Delta_{\mathrm{Gar}}^m)$ lies in
$T_u\widehat{\mathcal A}(b)$ if
\[
\operatorname{supp}(\bfa)\subset \{n+1,\dots,n+\ell\}.
\]
By the induction hypothesis, the Lusztig parameters of all cluster
variables appearing in $D_1$ and $D_2$ are supported on the tail positions.
Hence the same is true for the cluster monomials $D_1$ and $D_2$.
The displayed formula then implies that $\bfa$ is also supported on the
tail positions. Therefore $B(\bfi,\bfa)\in T_u\widehat{\mathcal A}(b)$,
and so
\[
\mu_k(X_k)\in T_u\widehat{\mathcal A}(b).
\]
This completes the induction and proves
\[
\mathcal A(\mathbf t_{\bfj}^{\,u})
\subset
T_u\widehat{\mathcal A}(b).
\]

Conversely, by Theorem~\ref{thm:Mk-minors}, the mutation sequence $M_1$ for the word $\bfj$
produces the root vectors
\[
D_{\bfi}[s^-,s]=E_s^{\bfi},
\qquad
n+1\le s\le n+\ell,
\]
as cluster variables of $\mathcal A(\mathbf t_{\bfj}^{\,u})$. These root
vectors generate $T_u\widehat{\mathcal A}(b)$. Hence
\[
T_u\widehat{\mathcal A}(b)
\subset
\mathcal A(\mathbf t_{\bfj}^{\,u}).
\]
Therefore
\[
\mathcal A(\mathbf t_{\bfj}^{\,u})
=
T_u\widehat{\mathcal A}(b),
\]
and all cluster monomials of $\mathcal A(\mathbf t_{\bfj}^{\,u})$ belong
to the global basis.

Finally, by Lemma~\ref{lem:Tu-global-basis}, for every $a>n$ one has
\[
D_{\bfi}[a,n+\ell\}
=
T_u\bigl(D_{\bfj}[a-n,\ell\}\bigr).
\]
Moreover, under the shift $a\mapsto a-n$, the matrices
$\Lambda_{\bfj}$ and $B_{\bfj}$, together with the exchangeable set
$K^{\operatorname{ex}}$, agree with the corresponding tail restrictions
of $\Lambda_{\bfi}$, $B_{\bfi}$, and $K_{\bfi}^{\operatorname{ex}}$.
Thus $T_u$ identifies the seed $\mathbf t_{\bfj}$ with the tail seed
$\mathbf t_{\bfj}^{\,u}$.

Applying $T_u^{-1}$ to
\[
\mathcal A(\mathbf t_{\bfj}^{\,u})
=
T_u\widehat{\mathcal A}(b),
\]
we obtain
\[
\mathcal A(\mathbf t_{\bfj})
=
\widehat{\mathcal A}(b).
\]
Furthermore, Lemma~\ref{lem:Tu-global-basis} identifies the global basis
elements supported on the tail subword $\bfj$ with the corresponding
global basis elements of $\widehat{\mathcal A}(b)$. Hence all cluster
monomials of $\mathcal A(\mathbf t_{\bfj})$ belong to the global basis of
$\widehat{\mathcal A}(b)$. This proves the theorem.
\end{proof}

\subsection*{Declaration}
The author declares that he has no conflict of interest. The author used ChatGPT as an auxiliary tool to polish the exposition,
improve readability, and review the clarity of the proofs. All mathematical
arguments and conclusions were checked and are the responsibility of the
author.

\end{document}